\documentclass[12pt,reqno]{amsart}
\setlength{\textheight}{220mm} \setlength{\textwidth}{155mm}
\setlength{\oddsidemargin}{1.25mm}
\setlength{\evensidemargin}{1.25mm} \setlength{\topmargin}{0mm}

\newcommand{\tf}{\tfrac}

\renewcommand{\Im}{\operatorname{Im}}

\renewcommand{\(}{\left\(}
\renewcommand{\)}{\right\)}
\newcommand{\abs}[1]{\lvert#1\rvert}
\newcommand\mylabel[1]{\label{#1}}
\newcommand\eqn[1]{(\ref{eq:#1})}
\newcommand\thm[1]{\ref{thm:#1}}
\newcommand\lem[1]{\ref{lem:#1}}

\newcommand\corol[1]{\ref{cor:#1}}
\newcommand\sect[1]{\ref{sec:#1}}
\newcommand\subsect[1]{\ref{subsec:#1}}
\newcommand\Etwid{\overset {\text{\lower 3pt\hbox{$\sim$}}}E}
\newcommand\thpow[1]{${#1}^{\text{th}}$}
\newcommand\dashsum{\sum_{n=-\infty \atop{n\neq 0}}^{\infty}}


\renewcommand{\pmod}[1]{\,(\textup{mod}\,#1)}
\numberwithin{equation}{section}
 \theoremstyle{plain}
\newtheorem{theorem}{Theorem}[section]
\newtheorem{lemma}[theorem]{Lemma}
\newtheorem{corollary}[theorem]{Corollary}

\newenvironment{example}{\par\noindent{\bf Example}\hskip 0.5em\ignorespaces}
   {\hfill \par}

\begin{document}
\title[Rank-Crank type PDEs]{Rank-Crank type PDEs 
and \\
generalized Lambert series identities}
\author{Song Heng Chan, Atul Dixit and Frank G. Garvan}
\address{Division of Mathematical Sciences, School of Physical and Mathematical Sciences, Nanyang Technological University, 21 Nanyang link, Singapore, 637371, Republic of Singapore}\email{ChanSH@ntu.edu.sg}
\address{Department of Mathematics, University of Illinois, 1409 West Green
Street, Urbana, IL 61801, USA} \email{aadixit2@illinois.edu}
\address{Department of Mathematics, University of Florida, Gainesville, Florida 32611, USA} \email{fgarvan@ufl.edu}
\thanks{The third author was supported in part by NSA Grant H98230-09-1-0051.}
\begin{abstract}
We show how Rank-Crank type PDEs for higher order Appell functions
due to Zwegers may be obtained from a generalized Lambert series
identity due to the first author. 
Special cases are the Rank-Crank PDE due to Atkin
and the
third author and  a PDE for a level $5$ Appell function also  found by the third
author. These two special PDEs are related to generalized Lambert series identities
due to Watson, and Jackson respectively. The first author's Lambert
series identities are common generalizations. We also show how Atkin and 
Swinnerton-Dyer's proof using  elliptic functions can be extended to prove these
generalized Lambert series identities.
\end{abstract}

\subjclass[2010]{11F11, 11P82, 11P83, 33D15}

\date{January 8, 2012}

\dedicatory{Dedicated to our friends, Mourad Ismail and Dennis Stanton}

\keywords{Rank-Crank PDE, higher level Appell function, Lambert series,
partition, $q$-series, basic hypergeometric function, quasimodular form}

\maketitle
\section{Introduction}
\mylabel{sec:intro}  
F.~J.~Dyson \cite{dys}, \cite[p.~52]{dys1} defined the rank of a
partition as the largest part minus the number of parts. 
Dyson conjectured that the residue of the rank mod $5$ divides the partitions
of $5n+4$ into $5$ equal clases thereby providing a combinatorial
interpretation of Ramanujan's famous partition 
congruences $p(5n+4)\equiv 0\pmod{5}$. He also conjectured that the rank mod $7$
likewise gives Ramanujan's partition congruence $p(7n+5)\equiv0\pmod{7}$.
Dyson's rank conjectures were proved by
A.~O.~L.~Atkin and H.~P.~F.~Swinnerton-Dyer \cite{as}.
The following was the crucial identity that Atkin and Swinnerton-Dyer needed
for the proof of the Dyson rank conjectures. It was first proved by
G.N.~Watson \cite{Wa}.
\begin{align}       
& \zeta\frac{[\zeta^2]_{\infty}}{[\zeta]_{\infty}}
\sum_{n=-\infty}^{\infty} \frac{(-1)^nq^{3n(n+1)/2}}
                               {1-zq^n}
+\frac{[\zeta]_{\infty}[\zeta^2]_{\infty}(q)_{\infty}^{2}}
      {[z/\zeta]_{\infty}[z]_{\infty}[\zeta z]_{\infty}} 
\mylabel{eq:asi} \\
&\qquad =\sum_{n=-\infty}^{\infty}(-1)^{n}q^{3n(n+1)/2}
  \left(\frac{\zeta^{-3n}}
             {1-zq^{n}/\zeta}
                              +\frac{\zeta^{3n+3}}
                                    {1-z\zeta q^n}\right).
\nonumber
\end{align}        
Throughout we use the standard $q$-notation
\begin{align}
(x)_{0}&:=(x;q)_{0}:=1,\nonumber\\
(x)_{n}&:=(x;q)_{n}:=\prod_{m=0}^{n-1}(1-xq^{m}),\nonumber\\
(x_{1},\cdots, x_{m})_{n}&:=(x_{1},\cdots, x_{m};q)_{n}:=(x_{1};q)_{n}\cdots (x_{m};q)_{n},\nonumber\\
[x_{1},\cdots, x_{m}]_{n}&:=[x_{1},\cdots, x_{m};q]_{n}:=(x_{1}, q/x_{1},\cdots, x_{m}, q/x_{m};q)_{n}.\nonumber
\end{align}
when $n$ is a nonnegative integer. Assuming $\abs{q}<1$ we also
use this notation when $n=\infty$ by interpreting its meaning as the limit
as $n\to\infty$.
Later M.~Jackson \cite{jack} proved an analogue of the above identity,
\begin{align}
&\frac{\zeta^{2}[\zeta^2]_{\infty}[x\zeta]_{\infty}[x/\zeta]_{\infty}}{[\zeta]_{\infty}[x]_{\infty}^2}\sum_{n=-\infty}^{\infty}\frac{(-1)^nq^{5n(n+1)/2}}{1-zq^n}+\frac{[\zeta]_{\infty}[\zeta^2]_{\infty}[x\zeta]_{\infty}[\zeta/x]_{\infty}(q)_{\infty}^{2}}{[z/x]_{\infty}[z/\zeta]_{\infty}[z]_{\infty}[z\zeta]_{\infty}[zx]_{\infty}}\nonumber\\
&+\frac{\zeta}{x}\frac{[\zeta]_{\infty}[\zeta^2]_{\infty}}{[x]_{\infty}[x^2]_{\infty}}\sum_{n=-\infty}^{\infty}(-1)^{n}q^{5n(n+1)/2}\left(\frac{x^{-5n}}{1-zq^{n}/x}+\frac{x^{5n+5}}{1-zx q^n}\right)
\mylabel{eq:jackson}\\
&=\sum_{n=-\infty}^{\infty}(-1)^{n}q^{5n(n+1)/2}\left(\frac{\zeta^{-5n}}{1-z\zeta^{-1}q^{n}}+\frac{\zeta^{5n+5}}{1-z\zeta q^n}\right).
\nonumber
\end{align}
Recently, 
the first author \cite[p.603]{chan} found a generalization of the above 
two identities, 
namely,
\begin{align}
&\frac{x_{1}^m[x_{2}/x_{1},\cdots, x_{m}/x_{1}, x_{1}x_{m},\cdots, x_{1}x_{2}, x_{1}^{2}]_{\infty}}{[x_{1}]_{\infty}[x_{2},\cdots, x_{m}]_{\infty}^{2}}\sum_{n=-\infty}^{\infty}\frac{(-1)^nq^{(2m+1)n(n+1)/2}}{1-zq^n}\nonumber\\
&+\frac{[x_{1}/x_{2},\cdots, x_{1}/x_{m}, x_{1}, x_{1}x_{m},\cdots, x_{1}x_{2}, x_{1}^{2}]_{\infty}(q)_{\infty}^{2}}{[z/x_{1}, z/x_{2},\cdots, z/x_{m}, z, zx_{m},\cdots, zx_{1}]_{\infty}}\nonumber\\
&+\bigg\{\frac{x_{1}}{x_{2}}\frac{[x_{1}/x_{3},\cdots, x_{1}/x_{m}, x_{1}, x_{1}x_{m},\cdots, x_{1}x_{3}, x_{1}^{2}]_{\infty}}{[x_{2}/x_{3},\cdots, x_{2}/x_{m}, x_{2}, x_{2}x_{m},\cdots, x_{2}^{2}]_{\infty}}
\mylabel{eq:shc}\\
&\quad\quad\times\sum_{n=-\infty}^{\infty}(-1)^{n}q^{(2m+1)n(n+1)/2}\left(\frac{x_{2}^{-(2m+1)n}}{1-zq^{n}/x_{2}}+\frac{x_{2}^{(2m+1)(n+1)}}{1-zx_{2} q^n}\right)+\text{idem}(x_{2}; x_{3},\cdots, x_{m})\bigg\}\nonumber\\
&\quad=\sum_{n=-\infty}^{\infty}(-1)^{n}q^{(2m+1)n(n+1)/2}\left(\frac{x_{1}^{-(2m+1)n}}{1-zq^{n}/x_{1}}+\frac{x_{1}^{(2m+1)(n+1)}}{1-zx_{1} q^n}\right),
\nonumber
\end{align}
where $g(a_{1},a_{2},\cdots, a_{m})+$ idem$(a_{1};a_{2},\cdots, a_{n})$ denotes the sum 
$$
\sum_{i=1}^{n}g(a_{i},a_{2},\cdots, a_{i-1}, a_{1},a_{i+1},\cdots, a_{m}),
$$ 
in which  the $i$-th term of the sum is obtained from the first by interchanging 
$a_{1}$ and $a_{i}$.

Equation \eqn{shc} was proved using partial fractions. 
Indeed, the $m=1$ case of \eqn{shc} is equivalent to \eqn{asi}, while 
the $m=2$ case is equivalent to \eqn{jackson}. The fact that the right-hand 
side of \eqn{jackson} is independent of $x$, and that the right-hand 
side of \eqn{shc} is independent of $x_{2}, x_{3}, \cdots, x_{m}$ seems to be 
intriguing at first. Indeed, one purpose of this article is to show that 
the left-hand sides of \eqn{jackson} and \eqn{shc} are really elliptic 
functions of order less than $2$, in fact entire functions as we show, in the 
respective variables ($x$ for \eqn{jackson} and $x_{2}$ for \eqn{shc} 
while holding $x_{3},\cdots,x_{m}$ fixed) and therefore that they must be constants 
which are nothing but the right-hand sides of \eqn{jackson} and \eqn{shc} 
respectively. Since \eqn{jackson} follows from \eqn{shc}, we show this 
only for \eqn{shc}. This is done in Section \sect{chansidelliptic}.

Let $N(m,n)$ denote the number of partitions of $n$ with rank $m$. Then the rank 
generating function $R(z,q)$ is given by
\begin{equation}
R(z,q)=\sum_{n=0}^{\infty}\sum_{m=-n}^n N(m,n)z^mq^n
      =\sum_{n=0}^{\infty}\frac{q^{n^2}}
                               {(zq)_{n}(z^{-1}q)_{n}}.
\mylabel{rgf}
\end{equation} 
In \cite{ang}, G.~E.~Andrews and the third author defined the crank of a partition, a partition 
statistic hypothesized by Dyson in \cite{dys}. It is the largest part if the partition 
contains no ones, and otherwise is the number of parts larger than the number of 
ones minus the number of ones. For $n>1$, we let $M(m,n)$ denote the number of partitions of $n$ 
with crank $m$. 
If we amend the definition of $M(m,n)$ for $n=1$, then
the generating function can be given as an infinite product.
Accordingly, we assume
$$
M(0,1)=-1,\, M(-1,1)=M(1,1)=1,\, \mbox{and $M(m,1)=0$ otherwise}.
$$
Then the crank generating function $C(z,q)$ is given by
\begin{equation}
C(z,q)=\sum_{n=0}^{\infty}\sum_{m=-n}^n M(m,n)z^mq^n
=\frac{(q)_{\infty}}
      {(zq)_{\infty}(z^{-1}q)_{\infty}}.
\mylabel{cgf}
\end{equation}
Atkin and the third author \cite{ag} found the so-called Rank-Crank PDE, 
a partial differential equation (PDE) which relates $R(z,q)$ and $C(z,q)$.
To state this PDE in its original form, we first define the differential operators 
\begin{equation}
\delta_{z}=z\frac{\partial}{\partial z},\hspace{5mm} 
\delta_{q}=q\frac{\partial}{\partial q}.
\mylabel{deltaop}
\end{equation}
Then the Rank-Crank PDE can be written as
\begin{align}\mylabel{eq:rcpde}
z(q)_{\infty}^{2}[C^{*}(z,q)]^{3}=\left(3\delta_{q}+\frac{1}{2}\delta_{z}+\frac{1}{2}\delta_{z}^{2}\right)R^{*}(z,q),
\end{align}
where 
\begin{align}\mylabel{mrcgf}
R^{*}(z,q)&:=\frac{R(z,q)}{1-z},\nonumber\\
C^{*}(z,q)&:=\frac{C(z,q)}{1-z}.
\end{align}
In \cite{ag}, it was shown how the Rank-Crank PDE and certain results for the 
derivatives of 
Eisenstein series lead to exact relations between rank and crank moments.
As in \cite{gmm}, define $N_k(m,n)$ by
\begin{equation}
\sum_{n\ge0} N_k(m,n) q^n 
= \frac{1}{(q)_\infty} \sum_{n=1}^\infty (-1)^{n-1} q^{n((2k-1)n-1)/2 + \abs{m}n}
(1-q^n),
\mylabel{Nkdef}
\end{equation}
of any positive integer $k$. When $k=1$ this is the generating function for the
crank, and when $k=2$ it is the generating function for the rank. When $k\ge2$,
$N_k(m,n)$ can be interpreted combinatorially as the number of partitions of $n$
into $k-1$ successive Durfee squares with $k$-rank equal to $m$. See
\cite[Eq.(1.11)]{gmm} for a definition of the $k$-rank. We define
\begin{equation}
R_k(z,q) := \sum_{n\ge0} \sum_{m=-n}^n N_k(m,n) z^m q^n.
\mylabel{Rkdef}
\end{equation}
From \cite[Eq.(4.5)]{gmm}, this generating function can be written as
\begin{equation}\mylabel{r3gfk}
R_k(z,q)=
\sum_{n_{k-1}\geq n_{k-2}\geq\cdots\geq n_{1}\geq 1}
\frac{q^{n_{1}^{2}+n_{2}^{2}+\cdots +n_{k-1}^{2}}}
{(q)_{n_{k-1}-n_{k-2}}\cdots(q)_{n_{2}-n_{1}}(zq)_{n_{1}}(z^{-1}q)_{n_{1}}},
\end{equation}
when $k\ge2$.
In Section \sect{RkSigproof}, we show that $R_k(z,q)$ is related to the level 
$2k-1$ Appell function
\begin{equation}
\Sigma^{(2k-1)}(z,q) := 
\sum_{n=-\infty}^{\infty}\frac{(-1)^{n}q^{(2k-1)n(n+1)/2}}{1-zq^{n}}.
\end{equation}
We obtain the following
\begin{theorem}
\mylabel{thm:RkSig}
For $k\ge1$,
\begin{align}
&R_k(z,q) \nonumber\\
&=\frac{1}{(q)_\infty}\left(
z^{k-1}(1-z)\Sigma^{(2k-1)}(z,q)-z\theta_{1,2k-1}(q)+
z(1-z)\sum_{m=0}^{k-3}z^{m}\theta_{2m+3,2k-1}(q)
\right),
\mylabel{eq:RkID}
\end{align}
where
\begin{equation}
\theta_{j,2k-1}(q)=\sum_{n=-\infty}^{\infty}(-1)^{n}q^{n((2k-1)n+j)/2},
\mylabel{eq:thetajkdef}
\end{equation}
for $j=1$, $3$, \dots, $2k-3$.
\end{theorem}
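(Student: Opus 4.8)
The plan is to work directly from the Lambert-type series \eqref{Nkdef} rather than from the multisum \eqref{r3gfk}. Since the right-hand side of \eqref{Nkdef} starts at the power $q^{\,k-1+\abs{m}}$, we have $N_k(m,n)=0$ for $\abs{m}>n$, so the restriction $-n\le m\le n$ in \eqref{Rkdef} may be dropped. Writing $G(j):=q^{j((2k-1)j-1)/2}$ and summing the geometric series
\[
\sum_{m=-\infty}^{\infty}z^{m}q^{\abs{m}j}=1+\frac{zq^{j}}{1-zq^{j}}+\frac{z^{-1}q^{j}}{1-z^{-1}q^{j}}=\frac{1}{1-zq^{j}}+\frac{1}{1-z^{-1}q^{j}}-1,
\]
then interchanging the sums (valid since the double series converges absolutely for $\abs{q}<1$ and $\abs{q}<\abs{z}<\abs{q}^{-1}$), one obtains
\[
(q)_{\infty}R_k(z,q)=\sum_{j=1}^{\infty}(-1)^{j-1}G(j)(1-q^{j})\left(\frac{1}{1-zq^{j}}+\frac{1}{1-z^{-1}q^{j}}-1\right).
\]

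The next step is to fold these three unilateral sums over $j\ge1$ into a single bilateral sum. The key symmetry is $G(-j)=G(j)q^{j}$, so that $G(j)(1-q^{j})=G(j)-G(-j)$ is odd in $j$; combining this with the partial fraction $\frac{1}{1-z^{-1}q^{j}}=1-\frac{1}{1-zq^{-j}}$ and the reflection $j\mapsto-j$ in the sum it produces, the ``$-1$'' contributions cancel the bare sum $\sum_{j\ge1}(-1)^{j-1}(G(j)-G(-j))$, and (the $j=0$ term being zero) one is left with
\[
(q)_{\infty}R_k(z,q)=-\sum_{j=-\infty}^{\infty}(-1)^{j}G(j)(1-q^{j})\,\frac{1}{1-zq^{j}}.
\]
Applying the partial fraction $\frac{1-q^{j}}{1-zq^{j}}=\frac{1}{z}-\frac{1-z}{z}\cdot\frac{1}{1-zq^{j}}$ and noting $\sum_{j}(-1)^{j}G(j)=\theta_{-1,2k-1}(q)=\theta_{1,2k-1}(q)$ (replace $j\mapsto-j$ in \eqref{eq:thetajkdef}), this becomes
\[
(q)_{\infty}R_k(z,q)=-\frac{1}{z}\,\theta_{1,2k-1}(q)+\frac{1-z}{z}\,T,\qquad T:=\sum_{j=-\infty}^{\infty}\frac{(-1)^{j}G(j)}{1-zq^{j}}.
\]

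The heart of the argument is to recover $\Sigma^{(2k-1)}$ from the ``shifted'' Appell sum $T$. Since $q^{(2k-1)j(j+1)/2}=G(j)q^{kj}$, we may write $G(j)=G(j)\bigl(1-(zq^{j})^{k}\bigr)+z^{k}q^{(2k-1)j(j+1)/2}$; dividing by $1-zq^{j}$, using $\frac{1-(zq^{j})^{k}}{1-zq^{j}}=\sum_{m=0}^{k-1}(zq^{j})^{m}$, and summing over $j$ gives
\[
T=\sum_{m=0}^{k-1}z^{m}\sum_{j=-\infty}^{\infty}(-1)^{j}G(j)q^{mj}+z^{k}\Sigma^{(2k-1)}(z,q)=z^{k}\Sigma^{(2k-1)}(z,q)+\sum_{m=0}^{k-1}z^{m}\theta_{2m-1,2k-1}(q),
\]
by \eqref{eq:thetajkdef}. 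Substituting this back, peeling off the $m=0$ and $m=1$ theta-terms (again $\theta_{-1,2k-1}=\theta_{1,2k-1}$), combining them with $-\frac{1}{z}\theta_{1,2k-1}(q)$ to produce exactly $-z\,\theta_{1,2k-1}(q)$, and re-indexing the remaining terms by $m\mapsto m+2$, yields \eqref{eq:RkID}. The identity, established first on the annulus, extends to all $z$ by analytic continuation.

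I expect the only real friction to be the bookkeeping in the last two steps: keeping the reflections $j\mapsto-j$ straight while folding, using $\theta_{-j,2k-1}=\theta_{j,2k-1}$ correctly, and checking that the powers of $z$ assemble so that the stray $z^{-1}$- and $z^{0}$-contributions collapse to $-z\,\theta_{1,2k-1}(q)$ while the rest becomes $z(1-z)\sum_{m=0}^{k-3}z^{m}\theta_{2m+3,2k-1}(q)$. The geometric summations, the interchange of summation, and the absolute convergence of $T$ are routine for $\abs{q}<1$; the cases $k=1,2$ (where the last sum in \eqref{eq:RkID} is empty, and $\theta_{1,1}\equiv0$ when $k=1$) need no separate treatment.
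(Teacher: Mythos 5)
Your proof is correct and takes essentially the same route as the paper's: both turn $R_k$ into a bilateral Lambert series coming from (\ref{Nkdef}) and then split $1/(1-zq^{j})$ by a finite geometric series so that theta functions peel off and $z^{k}\Sigma^{(2k-1)}(z,q)$ remains. The differences are only organizational --- you fold the sum over all $j\in\mathbb{Z}$ directly instead of invoking the symmetry $R_k(z,q)=R_k(z^{-1},q)$ as the paper does, and you extract $k$ geometric terms where the paper first removes a $\theta_{1,2k-1}$ piece and then extracts $k-2$ --- and your remark that the $k=1$ case survives because $\theta_{1,1}\equiv 0$ is exactly the needed observation.
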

This theorem generalizes Lemma 7.9 in \cite{gar} 
which gives a relation between the rank generating function $R(z,q)$ and a 
level $3$ Appell function.
The $k=1$ case of the theorem gives the familiar partial fraction expansion for the 
reciprocal of Jacobi's theta product $(z)_{\infty}(z^{-1}q)_{\infty}$ 
\cite[p.~1]{lnb}, \cite[p.~136]{tm}, 
since $\sum_{n=-\infty}^{\infty}(-1)^nq^{n(n+1)/2}=0$.

A few years ago 
the third author found a $4^{\text{th}}$ order PDE, 
which is an analogue of the Rank-Crank PDE and is related to the 
$3$-rank \cite{gmm}. To state this PDE we define
\begin{equation}
G^{(5)}(z,q) := \frac{1}{(q)_\infty^3} \Sigma^{(5)}(z,q).
\mylabel{eq:G5def}
\end{equation}
Then
\begin{align}
&24(q)_{\infty}^{2}[C^{*}(z,q)]^{5} \nonumber\\
&=24(1-10\Phi_3(q))G^{(5)}(z,q)\nonumber\\
&\quad+\left(100\delta_{q}+50\delta_{z}+100\delta_{q}\delta_{z}
+35\delta_{z}^{2}+20\delta_{q}\delta_{z}^{2}
+100\delta_{q}^{2}+10\delta_{z}^{3}+\delta_{z}^{4}\right)
G^{(5)}(z,q),
\mylabel{eq:grcpde}
\end{align}
where
\begin{equation}
\Phi_{3}(q):=\sum_{n=1}^{\infty}\frac{n^3q^n}{1-q^{n}}.
\mylabel{eq:E4twidDEF}
\end{equation}
This PDE can be written more compactly as
\begin{equation}
24(q)_{\infty}^{2}[C^{*}(z,q)]^{5} = (\mathbf{H}_{*}^2 - E_4)\, G^{(5)}(z,q), 
\mylabel{eq:grcpdev2}
\end{equation}
where  $\mathbf{H}_{*}$ is the operator
$$
\mathbf{H}_{*} := 5 + 10 \delta_q + 5 \delta_z + \delta_z^2, 
$$
and
$$
E_4 := E_4(q) := 1 + 240 \Phi_{3}(q),
$$
is the usual Eisenstein series of weight $4$.
The PDE \eqn{grcpde} was first conjectured by the third author and 
then subsequently proved 
and generalized by Zwegers \cite{zwe}. It was also Zwegers who first observed
that \eqn{grcpde} could be written in a more compact form. 
We now describe Zwegers's  generalization.
Define for $l\in\mathbb{Z}_{>0}$, the level $l$ Appell function as
\begin{equation}\mylabel{appell}
A_{l}(u, v):=A_{l}(u, v;\tau):=z^{l/2}\sum_{n=-\infty}^{\infty}\frac{(-1)^{l n}q^{ln(n+1)/2}w^{n}}{1-zq^{n}},
\end{equation}
where $z=e^{2\pi iu}$, $w=e^{2\pi iv}$, $q=e^{2\pi i\tau}$, and 
define the modified rank and crank generating functions as follows.
\begin{align}
\mathcal{R}&:=\mathcal{R}(u;\tau):=\frac{z^{1/2}q^{-1/24}}{1-z}R(z,q),
\mylabel{eq:ZR}\\
\mathcal{C}&:=\mathcal{C}(u;\tau):=\frac{z^{1/2}q^{-1/24}}{1-z}C(z,q).
\mylabel{eq:ZC}
\end{align}
Here and throughout we assume $\Im\tau>0$ so that $\abs{q}<1$.
Then the following theorem due to Zwegers gives for odd $l$, 
the $(l-1)^{\text{th}}$ order analogue of the Rank-Crank PDE.
\begin{theorem}[Zwegers\cite{zwe}]
\mylabel{thm:zwegers}
Let $l\geq 3$ be an odd integer. Define
\begin{align}
\mathcal{H}_{k}&:=\frac{l}{\pi i}\frac{\partial}{\partial\tau}+\frac{1}{(2\pi i)^{2}}\frac{\partial^{2}}{\partial u^{2}}-\frac{l(2k-1)}{12}E_{2},\nonumber\\
\mathcal{H}^{[k]}&:=\mathcal{H}_{2k-1}\mathcal{H}_{2k-3}\cdots\mathcal{H}_{3}\mathcal{H}_{1},
\end{align} 
where $E_{2}(\tau)=1-24\sum_{n=1}^{\infty}\sigma_{1}(n)q^{n}$ is the usual 
Eisenstein series of weight $2$ with $\sigma_{\alpha}(n)=\sum_{d|n}d^{\alpha}$.
Then there exist holomorphic modular forms $f_{j}$ (which can be constructed 
explicitly), with $j=4,6,8,\cdots,l-1$, on $SL_{2}(\mathbb{Z})$ of weight 
$j$, such that
\begin{equation}
\left(\mathcal{H}^{[(l-1)/2]}+\sum_{k=0}^{(l-5)/2}f_{l-2k-1}\mathcal{H}^{[k]}\right)A_{l}(u,0)=(l-1)!\eta^{l}\mathcal{C}^{l},
\mylabel{eq:ZwegersPDE}
\end{equation}
where $\eta$ is the Dedekind $\eta$-function, given by 
$\eta(\tau)=q^{1/24}(q)_{\infty}$.
\end{theorem}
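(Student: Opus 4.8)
The plan is to derive \eqref{eq:ZwegersPDE} from the first author's generalized Lambert series identity \eqref{eq:shc}, by the same mechanism that produces the Rank--Crank PDE \eqref{eq:rcpde} from \eqref{eq:asi}. Write $l=2m+1$, so that \eqref{eq:shc} is the identity at the relevant level. The first step is to put the assertion in the form of a PDE in the variables $q$ and $z$: since $q=e^{2\pi i\tau}$, $z=e^{2\pi iu}$ one has $\tfrac{l}{\pi i}\partial_\tau=2l\,\delta_q$ and $\tfrac{1}{(2\pi i)^{2}}\partial_u^{2}=\delta_z^{2}$, so $\mathcal{H}_k=2l\,\delta_q+\delta_z^{2}-\tfrac{l(2k-1)}{12}E_2$; moreover $A_l(u,0)=z^{l/2}\,\Sigma^{(l)}(z,q)$ (because $(-1)^{ln}=(-1)^{n}$ for odd $l$) and $\eta^{l}\mathcal{C}^{l}=(q)_\infty^{l}\,z^{l/2}\,[C^{*}(z,q)]^{l}$. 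Thus, up to the elementary conjugation by $z^{l/2}$, \eqref{eq:ZwegersPDE} becomes an $(l-1)$-st order PDE with quasimodular coefficients relating $\Sigma^{(l)}(z,q)$ and $[C^{*}(z,q)]^{l}$ (equivalently, via Theorem~\ref{thm:RkSig}, the $k$-rank generating functions, if one prefers that language).

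The source of this PDE is \eqref{eq:shc}: I would apply a suitable $(l-1)$-st order differential operator in the auxiliary variables $x_1,\dots,x_m$ to both sides and let $x_1=\cdots=x_m\to1$. On the right-hand side of \eqref{eq:shc} and in the two $\Sigma^{(l)}$-type families on its left, the factors $x_i^{\mp ln}$ turn under $\delta_{x_i}=x_i\partial_{x_i}$ into powers of $n$, which one trades for $\delta_q$ via $\delta_q\,q^{ln(n+1)/2}=\tfrac{l}{2}(n^{2}+n)\,q^{ln(n+1)/2}$, while the factors $(1-zq^{n}/x_i)^{-1}$ and $(1-zx_iq^{n})^{-1}$ produce $\delta_z$'s; after the limit these terms collapse to a polynomial differential operator $\mathcal{D}$ in $\delta_q,\delta_z$ applied to $\Sigma^{(l)}(z,q)$, whose coefficients are polynomials in $E_2$ and in logarithmic $q$-derivatives of the theta-quotient prefactors occurring in \eqref{eq:shc}. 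The product term of \eqref{eq:shc} has a zero of order exactly $2m=l-1$ at $(x_1,\dots,x_m)=(1,\dots,1)$ — two orders from $[x_1]_\infty[x_1^{2}]_\infty$ and one from each of the $2m-2$ factors $[x_1/x_j]_\infty$, $[x_1x_j]_\infty$ ($j=2,\dots,m$) — so the operator extracts precisely its leading Taylor coefficient, which, using $[z]_\infty=(1-z)(q)_\infty/C(z,q)$ and $[y]_\infty\sim(1-y)(q)_\infty^{2}$ near $y=1$, works out to a nonzero constant times $(q)_\infty^{l}\,[C^{*}(z,q)]^{l}$.

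Equating the two sides then yields an identity of the shape $\mathcal{D}\,\Sigma^{(l)}(z,q)=c\,(q)_\infty^{l}[C^{*}(z,q)]^{l}$, and the remaining task is to recognize $\mathcal{D}$ as $\mathcal{H}^{[(l-1)/2]}+\sum_{k=0}^{(l-5)/2}f_{l-2k-1}\mathcal{H}^{[k]}$. Expanding the product $\mathcal{H}^{[(l-1)/2]}=\mathcal{H}_{l-2}\cdots\mathcal{H}_3\mathcal{H}_1$ in terms of $\delta_q,\delta_z,E_2$, matching the top-order part of $\mathcal{D}$, and then solving recursively for the lower-order coefficients determines the $f_j$ uniquely as explicit polynomials in $E_2$ and in the $q$-derivatives of the theta-quotients above. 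One must finally check that each $f_j$ is a holomorphic modular form of weight $j$ on $SL_2(\mathbb{Z})$ — equivalently, a polynomial in $E_4$ and $E_6$; this is exactly where the $E_2$-corrections built into the operators $\mathcal{H}_k$ are needed, since they are precisely what cancels the non-modular (quasimodular) $E_2$-contributions produced by differentiating the theta-quotients, leaving genuine modular forms.

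The computations behind the first two paragraphs are routine but lengthy; the main obstacles are (i) carrying out the multivariate $\delta_{x_i}$-differentiation of \eqref{eq:shc} carefully enough to obtain $\mathcal{D}$ in closed form, and (ii) establishing the modularity and holomorphy of the $f_j$. The cases $l=3$ and $l=5$, where the procedure recovers \eqref{eq:rcpde} and \eqref{eq:grcpde} from \eqref{eq:asi} and \eqref{eq:jackson} respectively, both check the bookkeeping and serve as the base of an induction on $m$ that organizes the general argument.
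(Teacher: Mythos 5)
Your overall strategy is the one the paper actually uses in Section \sect{higher}: differentiate \eqn{shc} to order $2m=l-1$ at the point where the auxiliary variables all equal $1$, observe that the infinite-product term has a zero of exactly that order there so that only its leading Taylor coefficient survives and produces $(q)_\infty^{2m+1}[C^*(z,q)]^{2m+1}$ up to an explicit constant, and convert the powers of $n$ and the geometric-series denominators coming from the Lambert-series terms into a polynomial in a second-order operator in $\delta_q,\delta_z$ acting on $\Sigma^{(2m+1)}(z,q)$. The one organizational difference is that the paper does not differentiate in the $m$ variables $x_1,\dots,x_m$ separately: it first specializes $x_i=\zeta^i$ to obtain the single-variable identity \eqn{ChanID} and then applies $D_{2m}=\delta_\zeta^{2m}|_{\zeta=1}$, which is what makes the bookkeeping closed-form — the Lambert terms are handled by the explicit polynomial identity $D_\ell S_k=P_{k,\ell}(\mathcal{H}^*_k)\Sigma^{(k)}$ of Theorem \thm{DellSkIDTHM}, and the prefactors by logarithmic-derivative recurrences \eqn{rec1}, \eqn{rec2} whose inhomogeneous terms are Eisenstein series. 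Your multivariate version would in principle reach the same place, but you should expect the reduction to one variable to be essential in practice for obtaining the operator $\mathcal{D}$ "in closed form," which you yourself flag as the first obstacle.

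The genuine gap is in your last step. What this method delivers is Theorem \thm{maintheorem} and Corollary \corol{maincor}: a PDE of the form $\bigl(\mathcal{H}^{*\,m}_{2m+1}+\sum_{k=0}^{m-1}f_{m-k}\,\mathcal{H}^{*\,k}_{2m+1}\bigr)\Sigma^{(2m+1)}=(2m)!\,[C^*]^{2m+1}(q)_\infty^{2m+1}$, in which the operator $\mathcal{H}^*_{2m+1}$ carries no $E_2$-correction and the coefficients $f_j$ are only shown to be \emph{quasimodular}, i.e.\ to lie in $\mathcal{W}_j$ (polynomials in $\Phi_1,\Phi_3,\Phi_5$, hence in $E_2$, $E_4$, $E_6$), and are given recursively rather than explicitly. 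The statement you are asked to prove is stronger: the operators $\mathcal{H}_k$ contain the specific $E_2$-terms $-\tfrac{l(2k-1)}{12}E_2$, and the $f_j$ are asserted to be \emph{holomorphic} modular forms of weight $j$ on $SL_2(\mathbb{Z})$, i.e.\ free of $E_2$. Your justification of this — that the $E_2$-corrections in $\mathcal{H}_k$ "are precisely what cancels the non-modular $E_2$-contributions produced by differentiating the theta-quotients" — is an assertion of exactly the hard part, with no argument offered. Triangular matching of leading terms does determine candidate coefficients $f_j$, but a priori they are quasimodular, and nothing in your outline forces the $E_2$-dependence to cancel. Zwegers establishes this using the modular and elliptic transformation behaviour of $A_l$ and $\mathcal{C}$ in the framework of (non-holomorphic) Jacobi forms; the present paper explicitly states in its concluding remarks that deriving Zwegers's exact form \eqn{ZwegersPDE} by this elementary route remains open. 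So your proposal, carried through, proves \eqn{mainresultcor} but not Theorem \thm{zwegers} as stated.
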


Zwegers proved this theorem using the formulas and methods motivated from the 
theory of Jacobi forms. In contrast to this, the proof of the Rank-Crank 
PDE by Atkin and the third author, which corresponds to the $l=3$ case 
of Zwegers's PDE, depends upon simply taking the second derivative with 
respect to $\zeta$ of both sides of \eqn{asi}. 
The main goal of this paper is to show how
a generalized Rank-Crank PDE of any odd order
follows from the Lambert series identity \eqn{shc} in a similar fashion.
We will obtain Zwegers's result in a different form. 
In our form the
coefficients are quasimodular forms rather than holomorphic modular
forms, but in contrast, our coefficients are given recursively.
See Theorem \thm{maintheorem} and Corollary \corol{maincor}.

This paper is organized as follows. In Section \sect{chansidelliptic}, 
we prove \eqn{shc} using the theory of elliptic functions. 
Then in Section \sect{RkSigproof} we prove Theorem \thm{RkSig}, which is
the theorem that relates $R_k(z,q)$ with the level $2k-1$ Appell function
$\Sigma^{(2k-1)}(z,q)$. 
In Section \sect{higher} we prove our main result that shows how
\eqn{shc} can be used to derive the higher order Rank-Crank-type PDEs of Zwegers.

In the light of (\ref{appell}), it should be observed that the identities 
\eqn{asi} and \eqn{jackson} are really the identities involving certain combinations of level $3$ and level $5$ Appell functions respectively while \eqn{shc} is an identity involving a combination of level $(2m+1)$ Appell functions. However, the analogue for level $1$ Appell function which cannot be derived from \eqn{shc} was found by R.~Lewis \cite[Equation 11]{lewis} and is as follows.
\begin{equation}\mylabel{lew}
\frac{[z]_{\infty}[\zeta^{2}]_{\infty}(q)_{\infty}^{2}}{[z\zeta]_{\infty}[\zeta]_{\infty}[z\zeta^{-1}]_{\infty}}=\sum_{n=-\infty}^{\infty}(-1)^{n}q^{n(n+1)/2}\left(\frac{\zeta^{-n}}{1-zq^{n}/\zeta}+\frac{\zeta^{n+1}}{1-z\zeta q^n}\right).
\end{equation}

\section{General Lambert series identity through elliptic function theory}
\mylabel{sec:chansidelliptic} 
Atkin and Swinnerton-Dyer's proof of \eqn{asi} depends in essence on
the theory of elliptic functions.
In this section, we show how this method of proof can be used to prove \eqn{shc}.
Let
\begin{equation}\mylabel{uvw}
z=e^{2\pi iu}, x_{1}=e^{2\pi iv}, x_{2}=e^{2\pi iw}, 
\end{equation}
and let
\begin{equation}\mylabel{a3am}
x_{j}=e^{2\pi ia_{j}}, \hspace{5mm}j=3,\cdots, m,
\end{equation}
where $u, v, w, a_{3}, \dots, a_{m}$ are all complex numbers. 
Also recall that $q=e^{2\pi i\tau}$, where Im $\tau>0$.

Let
\begin{equation}\mylabel{j}
[b]_{\infty}=:J(a,q)=:J(a),\hspace{2mm}\text{where}\hspace{2mm}b=e^{2\pi ia},a\in\mathbb{C}.
\end{equation}
Then using the Jacobi triple product identity \cite[p.~10, Theorem 1.3.3]{ntsr}, we easily find that,
\begin{equation}\mylabel{jtheta}
J(a,q)=\frac{ie^{\pi ia}q^{-1/8}}{(q)_{\infty}}\theta(a),
\end{equation}
where
\begin{equation}\mylabel{deftheta}
\theta(z)=\theta(z;\tau):=\sum_{n=-\infty}^{\infty}e^{\pi i\left(n+\tf{1}{2}\right)^2\tau+2\pi i\left(n+\tf{1}{2}\right)\left(z+\tf{1}{2}\right)}.
\end{equation}
Comparing this with the classical definition of $\theta_{1}(z)$ \cite[p.~355, Section 13.19, Equation 10]{htf}, we find that upon replacing $q$ by $q^{1/2}$ in this classical definition, $\theta(z)=-\theta_{1}(z)$. From \cite[p.~8]{zwegers}, we see that 
\begin{align}
\theta(z+1)&=-\theta(z),\mylabel{thetaqp1}\\
\theta(z+\tau)&=-e^{-\pi i\tau-2\pi iz}\theta(z),\mylabel{thetaqp2}\\
\theta(-z)&=-\theta(z),\mylabel{todd}\\
\theta^{'}(0;\tau)&=-2\pi q^{1/8}(q)_{\infty}^{3}\mylabel{thetaqp3}.
\end{align}
Using (\ref{thetaqp1}) and (\ref{thetaqp2}), we have
\begin{align}
J(a+1,q)&=J(a,q),\mylabel{propj1}\\
J(a+\tau,q)&=-e^{-2\pi ia}J(a,q),\mylabel{propj2}\\
J(a-\tau,q)&=-q^{-1}e^{2\pi ia}J(a,q),\mylabel{propj3}\\
J(-a,q)&=-e^{-2\pi ia}J(a,q).\mylabel{jn}
\end{align}

Using (\ref{j}), we rephrase \eqn{shc} as follows:
\begin{align}\mylabel{shcj}
&\frac{e^{2\pi imv}J(w-v)J(w+v)J(2v)}{J(v)(J(w))^{2}}\displaystyle\prod_{3\leq j\leq m}\frac{J(a_{j}-v)J(a_{j}+v)}{(J(a_{j}))^2}\sum_{n=-\infty}^{\infty}\frac{(-1)^nq^{(2m+1)n(n+1)/2}}{1-e^{2\pi iu}q^n}\nonumber\\
&+\frac{J(v-w)J(v)J(v+w)J(2v)(q)_{\infty}^{2}}{J(u-v)J(u-w)J(u)J(u+w)J(u+v)}\displaystyle\prod_{3\leq j\leq m}\frac{J(v-a_{j})J(v+a_{j})}{J(u-a_{j})J(u+a_{j})}\nonumber\\
&+\bigg\{ e^{2\pi i(v-w)}\frac{J(v)J(2v)}{J(w)J(2w)}\displaystyle\prod_{3\leq j\leq m}\frac{J(v-a_{j})J(v+a_{j})}{J(w-a_{j})J(w+a_{j})}\nonumber\\
&\quad\quad\times\sum_{n=-\infty}^{\infty}(-1)^{n}q^{(2m+1)n(n+1)/2}\left(\frac{e^{-2\pi iw(2m+1)n}}{1-e^{2\pi i(u-w)}q^{n}}+\frac{e^{2\pi iw(2m+1)(n+1)}}{1-e^{2\pi i(u+w)}q^n}\right)+\text{idem}(w; a_{3},\cdots, a_{m})\bigg\}\nonumber\\
&\quad=\sum_{n=-\infty}^{\infty}(-1)^{n}q^{(2m+1)n(n+1)/2}\left(\frac{e^{-2\pi iv(2m+1)n}}{1-e^{2\pi i(u-v)}q^{n}}+\frac{e^{2\pi iv(2m+1)(n+1)}}{1-e^{2\pi i(u+v)}q^n}\right).
\end{align}
Fix $a_{3},\cdots, a_{m}$, consider the left-hand side of (\ref{shcj}) as a function of $w$ only and denote it by $g(w)$. Let $f_{1}(w)$ denote the expression in line 1 of (\ref{shcj}), $f_{2}(w)$ the expression in line 2 of (\ref{shcj}) and $f_{3}(w)$ the expression in lines 3 and 4 of (\ref{shcj}). Then, using (\ref{propj1}), (\ref{propj2}) and (\ref{propj3}), we see                    
\begin{align}\label{f11tau1}
f_{1}(w+1)&=f_{1}(w)=f_{1}(w+\tau),\nonumber\\
f_{2}(w+1)&=f_{2}(w)=f_{2}(w+\tau),\nonumber\\
f_{3}(w+1)&=f_{3}(w).
\end{align}%
Another application of (\ref{propj2}) and (\ref{propj3}) gives
\begin{align}\mylabel{f3tau}
&f_{3}(w+\tau)\nonumber\\
&=\frac{e^{2\pi i(v-w-\tau)}J(v)J(2v)}{J(w+\tau)J(2w+2\tau)}\displaystyle\prod_{3\leq j\leq m}\frac{J(v-a_{j})J(v+a_{j})}{J(w+\tau-a_{j})J(w+\tau+a_{j})}\nonumber\\
&\quad\times\sum_{n=-\infty}^{\infty}(-1)^{n}q^{(2m+1)n(n+1)/2}\left(\frac{e^{-2\pi i(w+\tau)(2m+1)n}}{1-e^{2\pi i(u-w-\tau)}q^{n}}+\frac{e^{2\pi i(w+\tau)(2m+1)(n+1)}}{1-e^{2\pi i(u+w+\tau)}q^n}\right)\nonumber\\
&\quad+\sum_{k=3}^{m}e^{2\pi i(v-a_{k})}\frac{J(v)J(2v)J(v-w-\tau)J(v+w+\tau)}{J(a_{k})J(2a_{k})J(a_{k}-w-\tau)J(a_{k}+w+\tau)}\displaystyle\prod_{2<j<m+1\atop {j\neq k}}\frac{J(v-a_{j})J(v+a_{j})}{J(a_{k}-a_{j})J(a_{k}+a_{j})}\nonumber\\
&\quad\quad\quad\times\sum_{n=-\infty}^{\infty}(-1)^{n}q^{(2m+1)n(n+1)/2}\left(\frac{e^{-2\pi ia_{k}(2m+1)n}}{1-e^{2\pi i(u-a_{k})}q^{n}}+\frac{e^{2\pi ia_{k}(2m+1)(n+1)}}{1-e^{2\pi i(u+a_{k})}q^n}\right)\nonumber\\
&=e^{2\pi iv+4\pi imw}\frac{J(v)J(2v)}{J(w)J(2w)}\displaystyle\prod_{3\leq j\leq m}\frac{J(v-a_{j})J(v+a_{j})}{J(w-a_{j})J(w+a_{j})}\bigg(\sum_{n=-\infty}^{\infty}(-1)^{n}q^{(2m+1)(n+1)(n+2)/2}\nonumber\\
&\quad\times\frac{e^{-2\pi iw(2m+1)(n+1)}q^{-(2m+1)(n+1)}}{1-e^{2\pi i(u-w)}q^{n}}+\sum_{n=-\infty}^{\infty}(-1)^{n}q^{(2m+1)n(n-1)/2}\frac{e^{2\pi iw(2m+1)n}q^{(2m+1)n}}{1-e^{2\pi i(u+w)}q^{n}}\bigg)\nonumber\\
&\quad+\sum_{k=3}^{m}e^{2\pi i(v-a_{k})}\frac{J(v)J(2v)J(v-w)J(v+w)}{J(a_{k})J(2a_{k})J(a_{k}-w)J(a_{k}+w)}\displaystyle\prod_{2<j<m+1\atop {j\neq k}}\frac{J(v-a_{j})J(v+a_{j})}{J(a_{k}-a_{j})J(a_{k}+a_{j})}\nonumber\\
&\quad\quad\quad\times\sum_{n=-\infty}^{\infty}(-1)^{n}q^{(2m+1)n(n+1)/2}\left(\frac{e^{-2\pi ia_{k}(2m+1)n}}{1-e^{2\pi i(u-a_{k})}q^{n}}+\frac{e^{2\pi ia_{k}(2m+1)(n+1)}}{1-e^{2\pi i(u+a_{k})}q^n}\right)\nonumber\\
&=f_{3}(w).
\end{align}
Thus from (\ref{f11tau1}) and (\ref{f3tau}), we deduce that $g$ is a doubly periodic function in $w$ with periods $1$ and $\tau$. Our next task is to show that $g$ is an entire function of $w$ and hence a constant (with respect to $w$). We show that the poles of $g$ at $w=u$ and $w=-u$ are actually removable singularities by proving that $\lim_{w\to\pm u}(w\mp u)\left(f_{2}(w)+f_{3}(w)\right)=0$ which readily implies that $\lim_{w\to\pm u}(w\mp u)g(w)=0$. Let
\begin{equation}\mylabel{a}
A:=A(v, a_{3},\cdots ,a_{m};q):=J(v)J(2v)\displaystyle\prod_{3\leq j\leq m}J(v-a_{j})J(v+a_{j}).
\end{equation}
Next, applying (\ref{jtheta}), (\ref{jn}) and (\ref{thetaqp3}), 
we see that
{\allowdisplaybreaks\begin{align}\mylabel{puf2}
&\lim_{w\to u}(w-u)\left(f_{2}(w)+f_{3}(w)\right)\nonumber\\
&=A\lim_{w\to u}(w-u)\bigg\{\frac{J(v-w)J(v+w)(q)_{\infty}^{2}}
  {J(u-w)J(u+w)J(u-v)J(u)J(u+v)}\displaystyle\prod_{3\leq j\leq m}
  \frac{1}{J(u-a_{j})J(u+a_{j})}\nonumber\\
&\quad+\frac{e^{2\pi i(v-w)}}{J(w)J(2w)}\displaystyle\prod_{3\leq j\leq m}\frac{1}{J(w-a_{j})J(w+a_{j})}\bigg(\frac{1}{1-e^{2\pi i(u-w)}}+
\dashsum (-1)^{n}q^{(2m+1)n(n+1)/2}\nonumber\\
&\quad\quad\times \frac{e^{-2\pi iw(2m+1)n}}{1-e^{2\pi i(u-w)}q^{n}}+\sum_{n=-\infty}^{\infty}(-1)^{n}q^{(2m+1)n(n+1)/2}\frac{e^{2\pi iw(2m+1)(n+1)}}{1-e^{2\pi i(u+w)}q^n}\bigg)\bigg\}\nonumber\\
&=A\frac{e^{2\pi i(v-u)}}{J(u)J(2u)}\displaystyle\prod_{3\leq j\leq m}\frac{1}{J(u-a_{j})J(u+a_{j})}\left(\frac{-iq^{1/8}(q)_{\infty}^{3}}{\theta^{'}(0)}+\frac{1}{2\pi i}\right)\nonumber\\
&=0.
\end{align}}
\noindent
Similarly, $\lim_{w\to -u}(w+u)\left(f_{2}(w)+f_{3}(w)\right)=0$.
Now the only other possibility of a pole of $g$ is at $0$, which arises from 
$f_{1}$ and $f_{3}$ each having a pole at $0$. Again, to show that this is a removable 
singularity, it suffices to show that 
$\lim_{w\to 0}w\left(f_{1}(w)+f_{3}(w)\right)=0$. To show this, we need 
Jacobi's duplication formula for theta functions \cite[p.~488, Ex. 5]{ww}
\begin{equation}\mylabel{wwa}
\theta(2w)\theta_{2}\theta_{3}\theta_{4}=2\theta(w)\theta_{2}(w)\theta_{3}(w)\theta_{4}(w).
\end{equation}
Let 
\begin{align}\mylabel{bc}
B&:=B(u, v, a_{3},\cdots, a_{m};q):=e^{2\pi imv}\frac{J(2v)}{J(v)}\displaystyle\prod_{3\leq j\leq m}\frac{J(a_{j}-v)J(a_{j}+v)}{(J(a_{j}))^2}\sum_{n=-\infty}^{\infty}\frac{(-1)^nq^{(2m+1)n(n+1)/2}}{1-e^{2\pi iu}q^n}.\nonumber\\
\end{align}
Then from (\ref{shcj}) and (\ref{bc}),
{\allowdisplaybreaks\begin{align}\mylabel{mc}
&\lim_{w\to 0}w\left(f_{1}(w)+f_{3}(w)\right)\nonumber\\
&=\lim_{w\to 0}w\bigg\{B\frac{J(w-v)J(w+v)}{(J(w))^{2}}+A\frac{e^{2\pi i(v-w)}}{J(w)J(2w)}\displaystyle\prod_{3\leq j\leq m}\frac{1}{J(w-a_{j})J(w+a_{j})}\nonumber\\
&\quad\times\sum_{n=-\infty}^{\infty}(-1)^{n}q^{(2m+1)n(n+1)/2}\left(\frac{e^{-2\pi iw(2m+1)n}}{1-e^{2\pi i(u-w)}q^{n}}+\frac{e^{2\pi iw(2m+1)(n+1)}}{1-e^{2\pi i(u+w)}q^n}\right)\bigg\}\nonumber\\
&\quad+\lim_{w\to 0}w\sum_{k=3}^{m}e^{2\pi i(v-a_{k})}\frac{J(v)J(2v)J(v-w)J(v+w)}{J(a_{k})J(2a_{k})J(a_{k}-w)J(a_{k}+w)}\displaystyle\prod_{2<j<m+1\atop {j\neq k}}\frac{J(v-a_{j})J(v+a_{j})}{J(a_{k}-a_{j})J(a_{k}+a_{j})}\nonumber\\
&=\lim_{w\to 0}\frac{w}{\theta(w)}\bigg\{B\frac{\theta(w-v)\theta(w+v)}{\theta(w)}-Ae^{2\pi iv}q^{1/4}(q)_{\infty}^{2}\frac{e^{-5\pi iw}}{\theta(2w)}\displaystyle\prod_{3\leq j\leq m}\frac{1}{J(w-a_{j})J(w+a_{j})}\nonumber\\
&\quad\times\sum_{n=-\infty}^{\infty}(-1)^{n}q^{(2m+1)n(n+1)/2}\left(\frac{e^{-2\pi iw(2m+1)n}}{1-e^{2\pi i(u-w)}q^{n}}+\frac{e^{2\pi iw(2m+1)(n+1)}}{1-e^{2\pi i(u+w)}q^n}\right)\bigg\}\nonumber\\
&=\frac{1}{\theta^{'}(0)}\lim_{w\to 0}\frac{D(w)}{\theta(w)},
\end{align}}
where 
\begin{align}\mylabel{d}
D(w)&:=D(w, v, a_{3},\cdots, a_{m};q):=B\theta(w-v)\theta(w+v)-Ae^{2\pi iv}q^{1/4}(q)_{\infty}^{2}E(w)\nonumber\\
E(w)&:=E(w; u, a_{3}\cdots, a_{m};q):=\frac{e^{-5\pi iw}\theta(w)}{\theta(2w)}\displaystyle\prod_{3\leq j\leq m}\frac{1}{J(w-a_{j})J(w+a_{j})}\nonumber\\
&\quad\times\sum_{n=-\infty}^{\infty}(-1)^{n}q^{(2m+1)n(n+1)/2}\left(\frac{e^{-2\pi iw(2m+1)n}}{1-e^{2\pi i(u-w)}q^{n}}+\frac{e^{2\pi iw(2m+1)(n+1)}}{1-e^{2\pi i(u+w)}q^n}\right).\nonumber\\
\end{align}
Now using (\ref{wwa}), (\ref{jtheta}) and (\ref{jn}), we find that as $w\to 0$,
{\allowdisplaybreaks\begin{align}\mylabel{dsimp}
D(w)&\to -B\theta^{2}(v)-\frac{e^{2\pi iv}Aq^{1/4}(q)_{\infty}^{2}}{(-1)^{m-2}e^{-2\pi i(a_{3}+\cdots+a_{m})}(J(a_{3})\cdots J(a_{m}))^2}\sum_{n=-\infty}^{\infty}\frac{(-1)^{n}q^{(2m+1)n(n+1)/2}}{1-e^{2\pi iu}q^{n}}\nonumber\\
&=0,
\end{align}}%
which is observed by putting the expressions for $B$ and $C$ back in the first expression on the right side in (\ref{dsimp}). Thus,
\begin{equation}\mylabel{mc2}
\lim_{w\to 0}w\left(f_{1}(w)+f_{3}(w)\right)=\frac{D^{'}(0)}{{\theta^{'}(0)}^{2}}.
\end{equation}
Now we calculate $D^{'}(0)$.
\begin{equation}\mylabel{dcal1}
D^{'}(w)=B\left(\theta^{'}(w-v)\theta(w+v)+\theta(w-v)\theta^{'}(w+v)\right)
-e^{2\pi iv}Aq^{1/4}(q)_{\infty}^{2}E^{'}(w).
\end{equation}
Using (\ref{jtheta}) and (\ref{wwa}), we have
\begin{align}\mylabel{e}
E(w)&=(-1)^{m-2}q^{\frac{m-2}{4}}(q)_{\infty}^{2(m-2)}\frac{e^{-\pi iw(2m+1)}\theta_{2}\theta_{3}\theta_{4}}{2\theta_{2}(w)\theta_{3}(w)\theta_{4}(w)}\displaystyle\prod_{3\leq j\leq m}\frac{1}{\theta(w-a_{j})\theta(w+a_{j})}\nonumber\\
&\quad\times\sum_{n=-\infty}^{\infty}(-1)^{n}q^{(2m+1)n(n+1)/2}\left(\frac{e^{-2\pi iw(2m+1)n}}{1-e^{2\pi i(u-w)}q^{n}}+\frac{e^{2\pi iw(2m+1)(n+1)}}{1-e^{2\pi i(u+w)}q^n}\right).
\end{align}
Differentiating both sides with respect to $w$ and simplifying, we obtain
\begin{align}\mylabel{ep}
E^{'}(w)&=\frac{1}{2}(-1)^{m-2}q^{\frac{m-2}{4}}(q)_{\infty}^{2(m-2)}e^{-\pi iw(2m+1)}\theta_{2}\theta_{3}\theta_{4}\nonumber\\
&\quad\times\bigg\{-\frac{\pi i(2m+1)+\displaystyle\frac{\theta_{2}^{'}(w)}{\theta_{2}(w)}+\displaystyle\frac{\theta_{3}^{'}(w)}{\theta_{3}(w)}
+\displaystyle\frac{\theta_{3}^{'}(w)}{\theta_{3}(w)}+\displaystyle\sum_{3\leq j\leq m}\left(\frac{\theta^{'}(w-a_{j})}{\theta(w-a_{j})}+\frac{\theta^{'}(w+a_{j})}{\theta(w+a_{j})}\right)}{\theta_{2}(w)\theta_{3}(w)\theta_{4}(w)\displaystyle\prod_{3\leq j\leq m}\theta(w-a_{j})\theta(w+a_{j})}\nonumber\\
&\quad\times\sum_{n=-\infty}^{\infty}(-1)^{n}q^{(2m+1)n(n+1)/2}\left(\frac{e^{-2\pi iw(2m+1)n}}{1-e^{2\pi i(u-w)}q^{n}}+\frac{e^{2\pi iw(2m+1)(n+1)}}{1-e^{2\pi i(u+w)}q^n}\right)\nonumber\\
&\quad+\frac{1}{\theta_{2}(w)\theta_{3}(w)\theta_{4}(w)}\displaystyle\prod_{3\leq j\leq m}\frac{1}{\theta(w-a_{j})\theta(w+a_{j})}F^{'}(w)\bigg\},
\end{align}
where
\begin{equation}\mylabel{f}
F(w):=F(w,u,m;q):=\sum_{n=-\infty}^{\infty}(-1)^{n}q^{(2m+1)n(n+1)/2}\left(\frac{e^{-2\pi iw(2m+1)n}}{1-e^{2\pi i(u-w)}q^{n}}+\frac{e^{2\pi iw(2m+1)(n+1)}}{1-e^{2\pi i(u+w)}q^n}\right).
\end{equation}
It is straightforward to see that 
\begin{equation}\mylabel{fp}
F^{'}(0)=2\pi i(2m+1)\sum_{n=-\infty}^{\infty}\frac{(-1)^{n}q^{(2m+1)n(n+1)/2}}{1-e^{2\pi iu}q^{n}}.
\end{equation}
From (\ref{todd}), we have
\begin{equation}\mylabel{tpodd}
\theta^{'}(-z)=\theta^{'}(z).
\end{equation}
Then letting $w\to 0$ in (\ref{ep}), and using (\ref{todd}), (\ref{tpodd}), (\ref{fp}) and the fact that $\theta_{k}^{'}(0)=0$ for $2\leq k\leq 4$, we find that 
\begin{equation}\mylabel{epr}
E^{'}(0)=0.
\end{equation}
Using (\ref{tpodd}) and (\ref{epr}) in (\ref{dcal1}), we finally deduce that $D^{'}(0)=0$.

With the help of (\ref{mc2}), this then implies that $\lim_{w\to 0}w\left(f_{1}(w)+f_{3}(w)\right)=0$ and thus $\lim_{w\to 0}wg(w)=0$. Thus $w=0$ is also a removable singularity, which implies that $g$ is an doubly periodic entire function and hence a constant, say $K$ (which may very well depend on $v$). Finally, since $J(0)=0$, we have
\begin{equation*}
K=g(v)=\sum_{n=-\infty}^{\infty}(-1)^{n}q^{(2m+1)n(n+1)/2}\left(\frac{e^{-2\pi iv(2m+1)n}}{1-e^{2\pi i(u-v)}q^{n}}+\frac{e^{2\pi iv(2m+1)(n+1)}}{1-e^{2\pi i(u+v)}q^n}\right).
\end{equation*}
This completes the proof.

\section{Proof of Theorem \thm{RkSig}}
\mylabel{sec:RkSigproof} 
From \cite[Eq.(4.3)]{gmm}, we see that
{\allowdisplaybreaks\begin{align}\mylabel{2nd}
R_{k}(z, q)
&=\frac{1}{(q)_{\infty}}\sum_{n=1}^{\infty}(-1)^{n-1}q^{n((2k-1)n-1)/2}(1-q^{n})\left(\frac{1}{1-zq^{n}}+\frac{z^{-1}q^{n}}{1-z^{-1}q^{n}}\right)\nonumber\\
&=\frac{z^{-1}}{(q)_{\infty}}\sum_{n=-\infty \atop{n\neq 0}}^{\infty}(-1)^{n-1}q^{n((2k-1)n+1)/2}\frac{1-q^{n}}{1-z^{-1}q^{n}}.\nonumber\\
\end{align}
Replacing $z$ by $z^{-1}$ in (\ref{r3gfk}) and (\ref{2nd}),
we see that
\allowdisplaybreaks
\begin{align*}
&R_k(z, q)\nonumber\\
&=\frac{z}{(q)_{\infty}}
\sum_{n=-\infty \atop{n\neq 0}}^{\infty}(-1)^{n-1}q^{n((2k-1)n+1)/2}
\frac{1-q^{n}}{1-zq^{n}}\nonumber
\\
&=\frac{z}{(q)_{\infty}}
\sum_{n=-\infty \atop{n\neq 0}}^{\infty}(-1)^{n-1}q^{n((2k-1)n+1)/2}
\left(1-\frac{(1-z)q^{n}}{1-zq^{n}} \right)\nonumber
\\
&=\frac{z}{(q)_{\infty}}
\left(\sum_{n=-\infty \atop{n\neq 0}}^{\infty} (-1)^{n-1}q^{n((2k-1)n+1)/2} 
+ (1-z)\sum_{n=-\infty \atop{n\neq 0}}^{\infty}\frac{(-1)^{n}q^{n((2k-1)n+3)/2}}
                                                         {1-zq^{n}}\right)\nonumber
\\
&=\frac{z}{(q)_{\infty}}
\left(1- \sum_{n=-\infty}^\infty (-1)^nq^{n((2k-1)n+1)/2} 
+ (1-z)\sum_{n=-\infty \atop{n\neq 0}}^{\infty}
\frac{(-1)^{n}q^{n((2k-1)n+3)/2}}{1-zq^{n}}\right)\nonumber
\\
&=\frac{-z\theta_{1,2k-1}(q)}{(q)_{\infty}}
+\frac{z}{(q)_{\infty}}
\bigg(1+(1-z)\sum_{n=-\infty \atop{n\neq 0}}^{\infty}
\frac{(-1)^{n}q^{n((2k-1)n+3)/2}}{1-zq^{n}}\bigg)\nonumber\\
&=\frac{-z\theta_{1,2k-1}(q)}{(q)_{\infty}}+\frac{z}{(q)_{\infty}}\bigg(1+(1-z)\sum_{n=-\infty \atop{n\neq 0}}^{\infty}(-1)^{n}q^{n((2k-1)n+3)/2}\left(\frac{z^{k-2}q^{(k-2)n}}{1-zq^{n}}+\frac{1-(zq^{n})^{k-2}}{1-zq^{n}}\right)\bigg)\nonumber\\
&=\frac{-z\theta_{1,2k-1}(q)}{(q)_{\infty}}+\frac{z}{(q)_{\infty}}\bigg(1+(1-z)\sum_{n=-\infty \atop{n\neq 0}}^{\infty}(-1)^{n}q^{n((2k-1)n+3)/2}\left(\frac{z^{k-2}q^{(k-2)n}}{1-zq^{n}}+\sum_{m=0}^{k-3}z^mq^{mn}\right)\bigg)\nonumber\\
&=\frac{-z\theta_{1,2k-1}(q)}{(q)_{\infty}}+\frac{z}{(q)_{\infty}}\bigg(1+z^{k-2}(1-z)\sum_{n=-\infty \atop{n\neq 0}}^{\infty}\frac{(-1)^{n}q^{(2k-1)n(n+1)/2}}{1-zq^{n}}\nonumber\\
&\quad\quad\quad\quad\quad\quad\quad\quad\quad\quad\quad+(1-z)\sum_{m=0}^{k-3}z^{m}\sum_{n=-\infty \atop{n\neq 0}}^{\infty}(-1)^{n}q^{n((2k-1)n+2m+3)/2}\bigg)\nonumber\\
&=\frac{-z\theta_{1,2k-1}(q)}{(q)_{\infty}}+\frac{z}{(q)_{\infty}}\bigg(z^{k-2}(1-z)\sum_{n=-\infty}^{\infty}\frac{(-1)^{n}q^{(2k-1)n(n+1)/2}}{1-zq^{n}}\nonumber\\
&\quad\quad\quad\quad\quad\quad\quad\quad\quad\quad\quad+(1-z)\sum_{m=0}^{k-3}z^{m}\sum_{n=-\infty}^{\infty}(-1)^{n}q^{n((2k-1)n+2m+3)/2}\bigg)\nonumber\\
&=\frac{1}{(q)_{\infty}}\left(-z\theta_{1,2k-1}(q)+z^{k-1}(1-z)\Sigma^{(k)}(z,q)+z(1-z)\sum_{m=0}^{k-3}z^{m}\theta_{2m+3,2k-1}(q)\right).
\end{align*}}
This completes the proof of Theorem \thm{RkSig}.
\section{Higher order Rank-Crank-type PDEs}
\mylabel{sec:higher} 
In this section we show 
how the generalized Lambert series identity \eqn{shc} can be used to derive general 
Rank-Crank PDEs 
of the type found by Zwegers.
\subsection{The idea}                        
\mylabel{subsec:idea} 
First we let $x_{i}=\zeta^{i}, 1\leq i\leq m$ in \eqn{shc} to obtain   
\begin{equation}
 Y_m(\zeta,z,q) \, (q)_\infty^2
= S_{2m+1}(\zeta,z,q)
+ \sum_{j=1}^{m-1} F_{j,m}(\zeta,q) \, S_{2m+1}(\zeta^{j+1},z,q) 
-F_{0,m}(\zeta,q)\,\Sigma^{(2m+1)}(z,q),
\mylabel{eq:ChanID}
\end{equation}
where
\begin{equation}
S_k(\zeta,z,q)
:=\sum_{n=-\infty}^{\infty}(-1)^{n}
q^{kn(n+1)/2}
\left(\frac{\zeta^{-kn}}{1-z\zeta^{-1}q^{n}}
+\frac{\zeta^{k(n+1)}}{1-z\zeta q^n}\right),
\mylabel{eq:Skdef}
\end{equation}
for $k$ odd and
\begin{align}
F_{0,m}(\zeta,q) &:=\zeta^{m}\frac{\left[\zeta^{m+1}\right]_{\infty}}
                                 {\left[\zeta^{m}\right]_{\infty}}, 
\mylabel{eq:F0mdef}\\
F_{j,m}(\zeta,q) &:=
\frac{\left[\zeta^{-(m-1)}, \zeta^{-(m-2)}, \cdots, \zeta^{-(m-j)}\right]_{\infty}}
     {\left[\zeta^{m+2}, \cdots, \zeta^{m+j+1}\right]_{\infty}} 
     \quad\mbox{(for $1\le j \le m-1$)},
\mylabel{eq:Fjmdef}\\
Y_m(\zeta,z,q)   &:=
\frac{\left[\zeta^{-(m-1)}, \zeta^{-(m-2)}, \cdots ,\zeta^{-2}, \zeta^{-1}, \zeta, 
              \zeta^{2}, \cdots, \zeta^{m},  \zeta^{m+1}\right]_{\infty}}
     {\left[z\zeta^{-m}, z\zeta^{-(m-1)}, \cdots ,z\zeta^{-2}, z\zeta^{-1}, 
        z, z\zeta, z\zeta^{2}, \cdots, z\zeta^{m-1}, z\zeta^{m}\right]_{\infty}}.
\mylabel{Ymdef}
\end{align}
The basic idea is to apply the operator $D_{2m}$ to both sides of \eqn{ChanID} where
\begin{equation}
D_{\ell} := 
\left.\left(\zeta\frac{\partial}{\partial \zeta}\right)^\ell \right|_{\zeta=1}
=\left.\delta_\zeta^\ell \right|_{\zeta=1}.
\mylabel{eq:Delldef}
\end{equation}
We will also need the differential operator
\begin{equation}
\mathcal{H}^*_k := k\delta_z + 2k \delta_q + \delta_z^2. 
\mylabel{eq:Hstarkdef}
\end{equation}
We note that the operator $\mathcal{H}^*_k$ differs from Zwegers's
$\mathcal{H}_k$ although they are similar.
First we need to write the functions $\Sigma^{(k)}(z,q)$ and $S_k(z,q)$
as double series. Throughout we assume that $0<\abs{q}<1$, $z\not\in
\{q^n\,:\,n\in\mathbb{Z}\} \cup \{0\}$ and 
$\zeta\not\in\{z^{\pm1}q^n\,:\,n\in\mathbb{Z}\} \cup \{0\}$. We obtain
\begin{align}
S_k(\zeta,z,q)
&=\sum_{n=0}^{\infty}(-1)^{n}
q^{kn(n+1)/2}
\left(\zeta^{-kn}\sum_{m=0}^\infty z^m \zeta^{-m} q^{mn}
+\zeta^{k(n+1)}\sum_{m=0}^\infty z^m \zeta^{m} q^{mn}\right) \nonumber\\
& - \sum_{n=1}^{\infty}(-1)^{n}
q^{kn(n-1)/2}
\left(\zeta^{kn}\sum_{m=1}^\infty z^{-m} \zeta^{m} q^{mn}
+\zeta^{k(-n+1)}\sum_{m=1}^\infty z^{-m} \zeta^{-m} q^{mn}\right)\nonumber\\
&=\sum_{n=0}^{\infty}\sum_{m=0}^\infty
   (-1)^n z^m q^{kn(n+1)/2 + mn} \left(\zeta^{-kn-m} + \zeta^{k(n+1)+m}\right)
\nonumber\\
&-\sum_{n=1}^{\infty}\sum_{m=1}^\infty
   (-1)^n z^{-m} q^{kn(n-1)/2 + mn} \left(\zeta^{kn+m} + \zeta^{-kn+k-m}\right)
\mylabel{eq:Skid}
\end{align}
and
\begin{equation}
\Sigma^{(k)}(z,q)
=\sum_{n=0}^{\infty}\sum_{m=0}^\infty
   (-1)^n z^m q^{kn(n+1)/2 + mn} 
-\sum_{n=1}^{\infty}\sum_{m=1}^\infty
   (-1)^n z^{-m} q^{kn(n-1)/2 + mn}. 
\mylabel{eq:Sigid}
\end{equation}
We have
\begin{theorem}\mylabel{thm:DellSkIDTHM}
Suppose $k$ is odd and  $1\le \ell \le k-1$. Then
\begin{equation}
\mylabel{eq:DellSkID}
D_{\ell}\, S_k(\zeta,z,q) 
= P_{k,\ell}(\mathcal{H}^*_k) \Sigma^{(k)}(z,q), 
\end{equation}
where
\begin{equation}
P_{k,\ell}(x) = \sum_{m=0}^{\lfloor \ell/2\rfloor}
\frac{\ell(\ell-m-1)!}{(\ell-2m)!m!} x^m k^{\ell-2m}.
\mylabel{Pkelldef}
\end{equation}
\end{theorem}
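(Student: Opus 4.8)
The plan is to verify \eqref{eq:DellSkID} termwise on the double-series expansions \eqref{eq:Skid} and \eqref{eq:Sigid}. Since $D_{\ell}=\delta_{\zeta}^{\ell}|_{\zeta=1}$ sends $\zeta^{a}$ to $a^{\ell}$, applying it to \eqref{eq:Skid} replaces the factor $\zeta^{-kn-m}+\zeta^{k(n+1)+m}$ in the first double sum by $\big(-(kn+m)\big)^{\ell}+\big((kn+m)+k\big)^{\ell}$, and --- after rewriting $\zeta^{kn+m}+\zeta^{-kn+k-m}=\zeta^{(kn+m-k)+k}+\zeta^{-(kn+m-k)}$ --- replaces the corresponding factor in the second double sum by $\big(-(kn+m-k)\big)^{\ell}+\big((kn+m-k)+k\big)^{\ell}$. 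On the other side I would first record, by a direct computation, that $\mathcal{H}^{*}_{k}=k\delta_{z}+2k\delta_{q}+\delta_{z}^{2}$ multiplies the monomial $z^{m}q^{kn(n+1)/2+mn}$ by $(kn+m)(kn+m+k)$ and the monomial $z^{-m}q^{kn(n-1)/2+mn}$ by $(kn+m-k)(kn+m)$. Because these monomials are linearly independent and are simultaneous eigenvectors of $\delta_{z}$ and $\delta_{q}$, every polynomial in $\mathcal{H}^{*}_{k}$ acts diagonally on them, so $P_{k,\ell}(\mathcal{H}^{*}_{k})$ applied to \eqref{eq:Sigid} multiplies the two families of monomials by $P_{k,\ell}\big((kn+m)(kn+m+k)\big)$ and $P_{k,\ell}\big((kn+m-k)(kn+m)\big)$ respectively. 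Since $D_\ell$ leaves the factors $(-1)^{n}z^{\pm m}q^{\cdots}$ untouched and $P_{k,\ell}(\mathcal{H}^{*}_{k})$ only scales each monomial, matching the two sides reduces the whole theorem to the single polynomial identity
\begin{equation*}
(-a)^{\ell}+(a+k)^{\ell}=P_{k,\ell}\big(a(a+k)\big),
\end{equation*}
understood as an identity of polynomials in $a$ with parameter $k$.

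To establish this identity I would observe that $-a$ and $a+k$ are precisely the two roots of $T^{2}-kT-a(a+k)$, so with $\alpha=-a$, $\beta=a+k$ one has $e_{1}:=\alpha+\beta=k$ and $e_{2}:=\alpha\beta=-a(a+k)$. Hence $(-a)^{\ell}+(a+k)^{\ell}$ is the power sum $p_{\ell}(\alpha,\beta)$, and the Girard--Waring formula (equivalently, Newton's identities for two variables) gives $p_{\ell}=\sum_{m=0}^{\lfloor\ell/2\rfloor}(-1)^{m}\frac{\ell}{\ell-m}\binom{\ell-m}{m}e_{1}^{\ell-2m}e_{2}^{m}$; substituting $e_{1}=k$, $e_{2}=-a(a+k)$, and simplifying $\frac{\ell}{\ell-m}\binom{\ell-m}{m}=\frac{\ell(\ell-m-1)!}{(\ell-2m)!\,m!}$ and $(-1)^{m}e_{2}^{m}=\big(a(a+k)\big)^{m}$, yields exactly $P_{k,\ell}\big(a(a+k)\big)$. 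As an alternative one could argue by induction on $\ell$ from the power-sum recurrence $p_{\ell}=e_{1}p_{\ell-1}-e_{2}p_{\ell-2}$, which here becomes $P_{k,\ell}(t)=kP_{k,\ell-1}(t)+tP_{k,\ell-2}(t)$ with base cases $P_{k,1}(t)=k$ and $P_{k,2}(t)=k^{2}+2t$; this is also in the spirit of the recursive description of the coefficients emphasized elsewhere in the paper.

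Finally I would justify the termwise manipulations: under the standing hypotheses ($0<\abs{q}<1$, $z\notin\{q^{n}:n\in\mathbb{Z}\}$, and $\zeta$ in a neighbourhood of $1$), the series \eqref{eq:Skid}, \eqref{eq:Sigid} and all those obtained from them by finitely many applications of $\delta_{\zeta},\delta_{z},\delta_{q}$ converge absolutely and locally uniformly, since the Gaussian factor $q^{kn(n\pm1)/2}$ dominates the polynomial and geometric factors produced by differentiation; this legitimizes differentiating under the summation sign, evaluating at $\zeta=1$, and applying $\mathcal{H}^{*}_{k}$ and its powers term by term. Granting these and the polynomial identity, the two sides of \eqref{eq:DellSkID} agree monomial by monomial. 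I expect the main obstacle to be purely the bookkeeping in the polynomial identity --- reconciling the Girard--Waring coefficient with the stated closed form of $P_{k,\ell}$ --- while the eigenvalue computation for $\mathcal{H}^{*}_{k}$ and the convergence estimates are routine.
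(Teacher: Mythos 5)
Your proposal is correct and is essentially the paper's own proof: both expand $S_k$ and $\Sigma^{(k)}$ as the double series \eqn{Skid} and \eqn{Sigid}, compute that $\mathcal{H}^{*}_{k}$ acts on $z^{m}q^{kn(n+1)/2+mn}$ and $z^{-m}q^{kn(n-1)/2+mn}$ with eigenvalues $(kn+m)(kn+m+k)$ and $(kn+m-k)(kn+m)$, and reduce the theorem to the single polynomial identity $(-a)^{\ell}+(a+k)^{\ell}=P_{k,\ell}(a(a+k))$. The only point of divergence is how that identity is verified: the paper proves the equivalent closed form $P_{k,\ell}(x)=(\tfrac{1}{2}k-\tfrac{1}{2}\sqrt{k^2+4x})^{\ell}+(\tfrac{1}{2}k+\tfrac{1}{2}\sqrt{k^2+4x})^{\ell}$ by binomial expansion, leaving a binomial-coefficient identity as an exercise, whereas you invoke the Girard--Waring formula, equivalently the recurrence $P_{k,\ell}(t)=kP_{k,\ell-1}(t)+tP_{k,\ell-2}(t)$, which is if anything more self-contained.
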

\begin{proof}
Suppose $k$ is odd and $1\le \ell \le k-1$ . First we prove that
\begin{equation}
\mylabel{Pellid}
P_{k,\ell}(x) =  (\tfrac{1}{2}k - \tfrac{1}{2}\sqrt{k^2+4x})^\ell
              +  (\tfrac{1}{2}k + \tfrac{1}{2}\sqrt{k^2+4x})^\ell.
\end{equation}
We have
\begin{align*}
 & (\tfrac{1}{2}k - \tfrac{1}{2}\sqrt{k^2+4x})^\ell
              +  (\tfrac{1}{2}k + \tfrac{1}{2}\sqrt{k^2+4x})^\ell
=\sum_{j=0}^{\lfloor \ell/2\rfloor} 
 \binom{\ell}{2j} k^{\ell-2j} 2^{1-\ell}(k^2+4x)^j \\
&\quad=\sum_{j=0}^{\lfloor \ell/2\rfloor} 
   \sum_{m=0}^j \binom{\ell}{2j} \binom{j}{m} x^m k^{\ell-2m} 2^{2m-\ell+1} 
=\sum_{m=0}^{\lfloor \ell/2\rfloor} 
   \left(\sum_{j=m}^{\lfloor \ell/2\rfloor} \binom{\ell}{2j} \binom{j}{m} \right)
   x^m k^{\ell-2m} 2^{2m-\ell+1}.
\end{align*}
The result (\ref{Pellid}) now follows from the binomial coefficient
identity
\begin{equation}
\mylabel{binomid}
\sum_{j=m}^{\lfloor \ell/2\rfloor} \binom{\ell}{2j} \binom{j}{m}
= 2^{\ell-2m-1} \frac{\ell(\ell-m-1)!}{(\ell-2m)! m!},
\end{equation}
which we leave as an exercise.

We observe that if $x=km + m^2 + k^2n(n+1) + 2mnk$ then
\begin{align*}
   k^2 + 4x &= (k + 2m + 2kn)^2,\\
\tfrac{1}{2}k - \tfrac{1}{2}\sqrt{k^2+4x} &= -kn -m,\\
\tfrac{1}{2}k + \tfrac{1}{2}\sqrt{k^2+4x} &= k(n+1)+m ,
\end{align*}
and we see that
$$
D_{\ell} \left(\zeta^{-kn-m} + \zeta^{k(n+1)+m}\right)
=
(-kn-m)^\ell + (k(n+1)+m)^\ell
=P_{k,\ell}(km + m^2 + k^2n(n+1) + 2mnk).
$$              

Similarly we find that
$$
D_{\ell} \left(\zeta^{kn+m} + \zeta^{-kn+k-m}\right)
=
(kn+m)^\ell + (-kn+k-m)^\ell
=P_{k,\ell}(-km + m^2 + k^2n(n-1) + 2mnk).
$$              

We note that
\begin{align*}
\mathcal{H}^{*}_k \left( q^{kn(n+1)/2 + mn} z^m\right)
&= (km + m^2 + k^2n(n+1) + 2mnk) \left( q^{kn(n+1)/2 + mn} z^m\right),\\
\mathcal{H}^{*}_k \left( q^{kn(n-1)/2 + mn} z^{-m}\right)
&= (-km + m^2 + k^2n(n-1) + 2mnk) \left( q^{kn(n-1)/2 + mn} z^{-m}\right).
\end{align*}

Thus
$$
D_{\ell} \left(q^{kn(n+1)/2 + mn} z^m(\zeta^{-kn-m} + \zeta^{kn+k+m})\right)
= P_{k,\ell}(\mathcal{H}^{*}_k) \left( q^{kn(n+1)/2 + mn} z^m\right),
$$
and
$$
D_{\ell} \left(q^{kn(n+1)/2 + mn} z^m(\zeta^{kn+m} + \zeta^{-kn+k-m})\right)
= P_{k,\ell}(\mathcal{H}^{*}_k) \left( q^{kn(n-1)/2 + mn} z^{-m}\right).
$$
The result \eqn{DellSkID}   follows from equations \eqn{Skid}   
and \eqn{Sigid}.   
\end{proof}

Next we calculate $D_{2m}$ of each term in \eqn{ChanID}.

\subsection{The term $Y_m(\zeta,z,q)$}
\mylabel{subsec:Ymterm} 
It is clear that the function $Y_m(\zeta,z,q)$ has a zero of order $2m$ at
$\zeta=1$. It is well-known that
$$
D_{2m}(f(\zeta)) = \sum_{i=1}^{2m} S(2m,j) f^{(j)}(1),
$$
where the numbers $S(2m,j)$ are Stirling numbers of the second kind.
Since $S(2m,2m)=1$ it follows that
\begin{equation}
D_{2m}\left(Y_m(\zeta,z,q))\right) = Y_m^{(2m)}(1,z,q)
= (-1)^{m-1} (2m)!\,(m+1)!\,(m-1)! \, [C^*(z,q)]^{2m+1} (q)_\infty^{2m-1}
\mylabel{eq:Ymterm}
\end{equation}
by an easy calculation.

\subsection{The term $F_{0,m}(\zeta,q)$}
\mylabel{subsec:F0mterm} 
By logarithmic differentiation we have
\begin{equation}
\delta_\zeta F_{0,m}(\zeta,q) = L_{0,m}(\zeta,q) \, F_{0,m}(\zeta,q).
\mylabel{eq:dzetaF0}
\end{equation}
where
\begin{align}
L_{0,m}(\zeta,q) 
&= K_{0,m}(\zeta) 
- (m+1) \sum_{i=1}^\infty \left(
  \frac{\zeta^{m+1}q^i}{1-\zeta^{m+1}q^i} - \frac{\zeta^{-m-1}q^i}{1-\zeta^{-m-1}q^i}\right)
\nonumber\\
& \qquad\qquad + m \sum_{i=1}^\infty \left(
  \frac{\zeta^{m}q^i}{1-\zeta^{m}q^i} - \frac{\zeta^{-m}q^i}{1-\zeta^{-m}q^i}\right)
\nonumber\\
&= K_{0,m}(\zeta) 
- (m+1) \sum_{i,n\ge1} (\zeta^{m+1}q^i)^n - (\zeta^{-m-1}q^i)^n 
+ m \sum_{i,n\ge1} (\zeta^{m}q^i)^n - (\zeta^{-m}q^i)^n 
\mylabel{eq:Lomdef}
\\
K_{0,m}(\zeta)
&= m + J_{m}(\zeta) - J_{m-1}(\zeta),  
\mylabel{eq:Gomdef}\\
J_m(\zeta)
&= \frac{\displaystyle\sum_{n=1}^m n\zeta^n}{\displaystyle\sum_{n=0}^m \zeta^n}.
\mylabel{eq:Jmdef}
\end{align}
For any positive integer $k$ we define
\begin{equation}
G_{2k} := G_{2k}(q) 
:= \frac{1}{2}\zeta(1-2k) + \sum_{n=1}^\infty \frac{n^{2k-1}q^n}{1-q^n}
= - \frac{B_{2k}}{4k} + \Phi_{2k-1}(q),                             
\mylabel{eq:GEis}
\end{equation}
where $B_{2n}$ is the $(2n)$-th Bernoulli number, and
\begin{equation}
\Phi_{2k-1} := \Phi_{2k-1}(q) := 
\sum_{n=1}^\infty \sigma_{2k-1}(n) q^n.
\mylabel{eq:Phidef}
\end{equation}
The function $G_{2k}$ is a normalized Eisenstein series. 
For $k>1$ it is an entire modular form of weight $2k$.
We need the following
\begin{lemma}
\mylabel{lem:DaJm}
If $m$ and $a$ are positive integers, then
$$
D_a(J_m(\zeta)) = \frac{B_{a+1}}{a+1}\left((m+1)^{a+1}-1\right).
$$
\end{lemma}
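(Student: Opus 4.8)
The plan is to recognize $J_m(\zeta)$ as a logarithmic $\delta_\zeta$-derivative and then to translate the operator $D_a$ into the extraction of a Taylor coefficient in an exponential variable, where the Bernoulli numbers enter through the classical generating function $\sum_{k\ge0}B_k t^k/k!=t/(e^t-1)$.

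First I would observe that $\sum_{n=1}^m n\zeta^n=\delta_\zeta\bigl(\sum_{n=0}^m\zeta^n\bigr)$, so that with $g(\zeta):=\sum_{n=0}^m\zeta^n=(1-\zeta^{m+1})/(1-\zeta)$ one has $J_m(\zeta)=\delta_\zeta\log g(\zeta)$. Next I would record the elementary fact that the substitution $\zeta=e^t$ turns $\delta_\zeta=\zeta\,\partial_\zeta$ into $\partial_t$ and $\zeta=1$ into $t=0$; hence for every $f$ holomorphic near $\zeta=1$ one has $D_a\bigl(f(\zeta)\bigr)=\partial_t^a\bigl[f(e^t)\bigr]\big|_{t=0}=a!\,[t^a]\,f(e^t)$. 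Applied to $f=J_m$, this reduces the lemma to computing the Taylor expansion at $t=0$ of $J_m(e^t)=\partial_t\log g(e^t)$; note that $g(e^t)$ is nonvanishing near $t=0$ (it equals $m+1$ there), so $\log g(e^t)$ is genuinely holomorphic there.

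The main computation is to expand $\log g(e^t)$. Since $g(e^t)=(e^{(m+1)t}-1)/(e^t-1)$, setting $h(t):=\log\bigl((e^t-1)/t\bigr)$ — holomorphic near $t=0$ with $h(0)=0$ — gives $\log g(e^t)=h\bigl((m+1)t\bigr)-h(t)+\log(m+1)$, whence $J_m(e^t)=(m+1)\,h'\bigl((m+1)t\bigr)-h'(t)$. From the Bernoulli generating function one reads off $h'(t)=\frac{e^t}{e^t-1}-\frac1t=\frac12+\sum_{k\ge2}\frac{B_k}{k!}\,t^{k-1}$, and therefore
\[
J_m(e^t)=\frac m2+\sum_{k\ge2}\frac{B_k}{k!}\bigl((m+1)^k-1\bigr)\,t^{k-1}.
\]
Reading off the coefficient of $t^a$ for $a\ge1$ — so that $k=a+1\ge2$ and the anomalous $k=1$ term of $h'$ never enters — gives $[t^a]\,J_m(e^t)=\frac{B_{a+1}}{(a+1)!}\bigl((m+1)^{a+1}-1\bigr)$, and hence $D_a\bigl(J_m(\zeta)\bigr)=a!\,[t^a]\,J_m(e^t)=\frac{B_{a+1}}{a+1}\bigl((m+1)^{a+1}-1\bigr)$, as claimed.

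I do not expect a serious obstacle here; the only point requiring care is the bookkeeping near $t=0$ — that $\log g(e^t)$ is holomorphic, and that the hypothesis $a\ge1$ is precisely what keeps the exceptional $t^0$-coefficient $\tfrac12$ of $h'(t)$ (rather than $B_1=-\tfrac12$) out of the final formula. As a check one can verify the case $a=1$ by hand, where both sides equal $\tfrac{B_2}{2}\bigl((m+1)^2-1\bigr)=\tfrac{m(m+2)}{12}$.
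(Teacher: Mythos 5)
Your proof is correct and follows essentially the same route as the paper's: both recognize $J_m(\zeta)$ as the logarithmic $\delta_\zeta$-derivative of $(\zeta^{m+1}-1)/(\zeta-1)$, substitute $\zeta=e^t$ so that $D_a$ becomes $a!$ times a Taylor coefficient at $t=0$, and extract that coefficient from the Bernoulli generating function $t/(e^t-1)$. The only cosmetic difference is that you take the logarithm before substituting and package the expansion in $h(t)=\log\bigl((e^t-1)/t\bigr)$, whereas the paper writes the partial-fraction form $J_m(\zeta)=m+(m+1)/(\zeta^{m+1}-1)-1/(\zeta-1)$ first and expands each term directly.
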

\begin{proof}
Suppose $m$ and $a$ are positive integers. It is well-known that
\begin{equation}
\frac{x}{e^x-1} = \sum_{k=0}^\infty \frac{B_k x^k}{k!}.
\mylabel{eq:Berngen}
\end{equation}
By taking the logarithmic derivative of $(\zeta^{m+1}-1)/(\zeta-1)$ we find 
that
$$
J_m(\zeta) = m + (m+1) \frac{1}{\zeta^{m+1}-1} - \frac{1}{\zeta-1}.
$$
Hence by \eqn{Berngen} we have
$$
J_m(e^x) = m + \sum_{k=0}^\infty \frac{B_{k+1}}{(k+1)!}
                          \left( (m+1)^{k+1}-1 \right) x^k.
$$
The result now follows since
$$
D_{a}(J_m(\zeta)) = \left.\left( \frac{d}{dx} \right)^a J_m(e^x) \right|_{x=0}.
$$
\end{proof}
\begin{corollary}
\mylabel{cor:DaL0m}
Suppose $a$, $m$ are integers $a\ge0$ and $m\ge1$. Then
\begin{equation}
D_a(L_{0,m}(\zeta,q))
= 
\begin{cases} 
    2 (m^{a+1}-(m+1)^{a+1}) G_{a+1}(q)  & \mbox{if $a$ is odd},\\
    m+\frac{1}{2} & \mbox{if $a=0$},\\
    0 & \mbox{otherwise.}
\end{cases}
\mylabel{eq:DaL0}
\end{equation}
\end{corollary}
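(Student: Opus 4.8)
The plan is to apply the operator $D_a$ directly, term by term, to the expansion \eqn{Lomdef} of $L_{0,m}(\zeta,q)$, which already presents $L_{0,m}$ as the finite piece $K_{0,m}(\zeta)=m+J_m(\zeta)-J_{m-1}(\zeta)$ plus two Lambert-type double series. Since $0<\abs{q}<1$, these double series converge absolutely and uniformly for $\zeta$ in a neighbourhood of $1$, so $D_a=\delta_\zeta^a|_{\zeta=1}$ passes inside the sums; after that everything is bookkeeping, the only genuinely separate case being $a=0$.

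First I would dispatch the Lambert part. Because $\delta_\zeta^a(\zeta^{\pm cn}q^{in})=(\pm cn)^a\zeta^{\pm cn}q^{in}$, one finds
\begin{equation*}
D_a\Bigl(\sum_{i,n\ge1}\bigl((\zeta^{c}q^i)^n-(\zeta^{-c}q^i)^n\bigr)\Bigr)
=\bigl(c^a-(-c)^a\bigr)\sum_{n\ge1}\frac{n^aq^n}{1-q^n},
\end{equation*}
which vanishes for $a$ even (in particular for $a=0$) and equals $2c^a\sum_{n\ge1}n^aq^n/(1-q^n)$ for $a$ odd. Feeding in $c=m+1$ and $c=m$ with the coefficients $-(m+1)$ and $m$ appearing in \eqn{Lomdef}, the two Lambert contributions together give $0$ for $a$ even and $2\bigl(m^{a+1}-(m+1)^{a+1}\bigr)\sum_{n\ge1}n^aq^n/(1-q^n)$ for $a$ odd.

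Next I would handle $K_{0,m}(\zeta)$. For $a=0$ it is simply $K_{0,m}(1)=m+J_m(1)-J_{m-1}(1)=m+\tf m2-\tf{m-1}2=m+\tf12$, which together with the vanishing Lambert part yields the middle case. For $a\ge1$, $D_a(m)=0$ and \lem{DaJm} gives $D_aK_{0,m}(\zeta)=\tf{B_{a+1}}{a+1}\bigl((m+1)^{a+1}-m^{a+1}\bigr)$; when $a\ge2$ is even, $a+1$ is odd and $\ge3$, so $B_{a+1}=0$ and, combined with the previous paragraph, $D_aL_{0,m}(\zeta,q)=0$ as claimed.

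Finally, for odd $a$ I would add the two pieces to get
\begin{equation*}
D_aL_{0,m}(\zeta,q)=\bigl(m^{a+1}-(m+1)^{a+1}\bigr)\Bigl(2\sum_{n\ge1}\frac{n^aq^n}{1-q^n}-\frac{B_{a+1}}{a+1}\Bigr),
\end{equation*}
then identify $\sum_{n\ge1}n^aq^n/(1-q^n)=\Phi_a(q)$ (valid since $a$ is odd) and invoke \eqn{GEis} with $2k=a+1$, which gives $2G_{a+1}(q)=2\Phi_a(q)-\tf{B_{a+1}}{a+1}$, to obtain $D_aL_{0,m}(\zeta,q)=2\bigl(m^{a+1}-(m+1)^{a+1}\bigr)G_{a+1}(q)$, the first case. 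I expect the only real pitfall to be keeping the Bernoulli conventions straight, together with the fact that \lem{DaJm} is stated only for $a\ge1$ and omits the constant $m$ inside $K_{0,m}$, so the case $a=0$ must genuinely be computed by hand rather than by specializing the formula; there is no analytic obstacle.
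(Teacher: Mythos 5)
Your proof is correct and follows essentially the same route as the paper's: split $L_{0,m}$ via \eqn{Lomdef} into $K_{0,m}$ plus the Lambert double series, kill the even-$a$ and $a=0$ Lambert contributions by the parity of $(cn)^a-(-cn)^a$, apply Lemma \lem{DaJm} together with the vanishing of odd Bernoulli numbers $B_{a+1}$ for even $a\ge 2$, and recombine $\Phi_a$ with $B_{a+1}/(a+1)$ into $G_{a+1}$ via \eqn{GEis}. You merely spell out the $a=0$ evaluation and the term-by-term differentiation that the paper leaves as ``straightforward.''
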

\begin{proof}
The proof of \eqn{DaL0} when $a=0$ is straightforward. 
Suppose $a$ is even and positive. Then by Lemma \lem{DaJm}
\begin{align*}
D_a(L_{0,m}(\zeta,q)) &=
    D_a(K_{0,m}(\zeta)) 
   \\
&=
\frac{B_{a+1}}{a+1}(-m^{a+1}+(m+1)^{a+1}) \\
&=0,
\end{align*}
since the Bernoulli numbers $B_k$ are zero when $k>1$ is odd.
Now suppose $a$ is odd. Then again by Lemma \lem{DaJm}
\begin{align*}
D_a(L_{0,m}(\zeta,q)) &=
    D_a(K_{0,m}(\zeta)) 
              + 2 (m^{a+1}-(m+1)^{a+1}) \Phi_a(q) \\
&=
\frac{B_{a+1}}{a+1}(-m^{a+1}+(m+1)^{a+1})
              + 2 (m^{a+1}-(m+1)^{a+1}) \Phi_a(q) \\
&=
2 (m^{a+1}-(m+1)^{a+1}) G_{a+1}(q).
\end{align*}
\end{proof}
By applying $D_{a-1}$ to both sides of \eqn{dzetaF0} and
using \eqn{DaL0} we obtain the following recurrence
\begin{align}
D_a(F_{0,m}(\zeta,q)) &= (m + \tfrac{1}{2}) D_{a-1}(F_{0,m}(\zeta,q)) \nonumber\\
& \quad  + \sum_{i=1}^{\lfloor a/2 \rfloor} 2 \binom{a-1}{2i-1} 
    (m^{2i}-(m+1)^{2i}) G_{2i}(q) D_{a-2i}(F_{0,m}(\zeta,q)).
\mylabel{eq:rec1}
\end{align}
This together with the initial value 
\begin{equation}
D_0(F_{0,m}(\zeta,q))=F_{0,m}(1,q) =\tfrac{m+1}{m}
\mylabel{eq:D0F0m}
\end{equation}
uniquely determines the coefficients 
$D_a(F_{0,m}(\zeta,q))$.
We compute some examples
\begin{align*}
D_0(F_{0,2}) & = \frac{3}{2},\\
D_1(F_{0,2}) & = \frac{15}{4},\\
D_2(F_{0,2}) & = \frac{75}{8} - 15 G_2 = 10 - 15 \Phi_1,\\
D_3(F_{0,2}) & = \frac{365}{16}-\frac{225}{2} G_2= \frac{225}{8} - \frac{225}{2} \Phi_1,\\
D_4(F_{0,2}) & = \frac{1875}{32} - \frac{1125}{2} G_2 + 450 G_2^2 - 195 G_4
= 82 - 600 \Phi_1 + 450 \Phi_1^2 - 195 \Phi_3.
\end{align*}


\subsection{The terms $F_{j,m}(\zeta,q)$ ($1 \le j \le m-1$)}
\mylabel{subsec:Fjmterms} 
Suppose $1 \le j \le m-1$. We may obtain a similar recurrence for 
$D_a(F_{j,m}(\zeta,q))$.
This time we find that
\begin{equation}
\delta_\zeta F_{j,m}(\zeta,q) = L_{j,m}(\zeta,q) \, F_{j,m}(\zeta,q),
\mylabel{eq:dzetaFj}
\end{equation}
for some function $L_{j,m}(\zeta,q)$ that satisfies
\begin{equation}
D_a(L_{j,m}(\zeta,q))
= 
\begin{cases} 
    2 \sum_{i=1}^j \left((m+i+1)^{a+1}-(m-i)^{a+1}\right) G_{a+1}(q) &
    \mbox{if $a$ is odd},\\
    -j\left(m+\frac{1}{2}\right) & \mbox{if $a=0$},\\
    0 & \mbox{otherwise.}
\end{cases}
\mylabel{eq:DaLjm}
\end{equation}
The proof of \eqn{DaLjm} is analogous to the proof of Corollary \corol{DaL0m}.
By applying $D_{a-1}$ to both sides of \eqn{dzetaFj} and
using \eqn{DaLjm} we obtain the following recurrence
\begin{align}
D_a(F_{j,m}(\zeta,q)) &= -j(m + \tfrac{1}{2}) D_{a-1}(F_{j,m}(\zeta,q)) \nonumber\\
& \quad  + 
\sum_{i=1}^{\lfloor a/2 \rfloor} \sum_{k=1}^j 2 \binom{a-1}{2i-1} 
     ((m+k+1)^{2i}-(m-k)^{2i}) G_{2i}(q) D_{a-2i}(F_{j,m}(\zeta,q)).
\mylabel{eq:rec2}
\end{align}
This together with the initial value 
\begin{equation}
D_0(F_{j,m}(\zeta,q))=F_{j,m}(1,q) =(-1)^j \prod_{i=1}^j \frac{m-i}{m+i+1} 
\mylabel{eq:D0Fjm}
\end{equation}
uniquely determines the coefficients 
$D_a(F_{j,m}(\zeta,q))$.
We compute some examples
\begin{align*}
D_0(F_{1,2}) & = -\frac{1}{4},\\
D_1(F_{1,2}) & = \frac{5}{8},\\
D_2(F_{1,2}) & = -\frac{25}{16}-\frac{15}{2} G_2 =  -\frac{5}{4}-\frac{15}{2} \Phi_1,\\
D_3(F_{1,2}) & = \frac{125}{32}+\frac{225}{4} G_2 = \frac{25}{16}+\frac{225}{4} \Phi_1,\\
D_4(F_{1,2}) & = -\frac{625}{64}-\frac{1125}{4} G_2 - 675 G_2^2 - \frac{255}{2} G_4
= \frac{1}{4} - 225 \Phi_1 - 675 \Phi_1^2 - \frac{255}{2} \Phi_3.
\end{align*}

\subsection{The terms $S_{2m+1}(\zeta^{j+1},z,q)$ ($1 \le j \le m-1$)}
\mylabel{subsec:S2m1terms} 
Again suppose that $1 \le j \le m-1$. Consider the operator $T_j$ that
operates on a function $f(\zeta)$ by $T_j (f(\zeta))=f(\zeta^j)$.
Then
\begin{equation}
\delta_\zeta \circ T_j = j \, \left(T_j \circ \delta_\zeta\right).
\mylabel{eq:dzetaTj}
\end{equation}
A simple induction argument gives 
\begin{equation}
\delta_\zeta^a \circ T_j = j^a \, \left(T_j \circ \delta_\zeta^a\right),
\mylabel{eq:DzetapowTj}
\end{equation}
and
\begin{equation}
D_a \circ T_j = j^a \, D_a.
\mylabel{eq:DaTj}
\end{equation}
Thus by \eqn{DellSkID} we have
\begin{equation}
D_\ell S_{2m+1}(\zeta^{j+1},z,q) 
= (j+1)^\ell P_{2m+1,\ell}(\mathcal{H}^*_{2m+1}) \Sigma^{(2m+1)}(z,q). 
\mylabel{eq:DellS2m1zj}
\end{equation}

\subsection{The main theorem}
\mylabel{subsec:mainthm} 
We are now ready to derive our main theorem.
Applying $D_{2m}$ to both sides of \eqn{ChanID} and using \eqn{DellSkID},
\eqn{Ymterm}, \eqn{rec1}, \eqn{D0F0m}, \eqn{rec2}, \eqn{D0Fjm},
and \eqn{DellS2m1zj} we have
\begin{theorem}
\mylabel{thm:maintheorem}
\begin{align}
&(-1)^{m+1} (2m)!\,(m+1)!\,(m-1)! [C^*(z,q)]^{2m+1} (q)_\infty^{2m+1}
\nonumber\\
&=
 \Bigg( P_{2m+1,2m}(\mathcal{H}^{*}_{2m+1})  
\nonumber\\
&\quad +
\sum_{j=1}^{m-1} \sum_{a=0}^{2m} (j+1)^{2m-a} \binom{2m}{a} D_a(F_{j,m}(\zeta,q)) 
P_{2m+1,2m-a}(\mathcal{H}^{*}_{2m+1})  
\nonumber\\
&\quad  - D_{2m}(F_{0,m}(\zeta,q)) \Bigg) \Sigma^{(2m+1)}(z,q) 
\mylabel{eq:mainresult}
\end{align}
where the coefficient functions $D_a(F_{j,m})(\zeta,q))$ ($0 \le j \le m-1$)
are given recursively by \eqn{rec1} and \eqn{rec2}, and their initial values
\eqn{D0F0m} and \eqn{D0Fjm}.
\end{theorem}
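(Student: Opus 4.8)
The plan is to prove Theorem \thm{maintheorem} by simply applying the operator $D_{2m}=\delta_\zeta^{2m}|_{\zeta=1}$ to both sides of the identity \eqn{ChanID}, since every ingredient has already been assembled in Sections \subsect{Ymterm}--\subsect{S2m1terms}. First I would observe that $\delta_\zeta$ is a derivation, so for functions $f,g$ holomorphic near $\zeta=1$ the Leibniz rule gives $D_{2m}(fg)=\sum_{a=0}^{2m}\binom{2m}{a}D_a(f)\,D_{2m-a}(g)$; the products to which this is applied are $f=F_{j,m}(\zeta,q)$, $g=S_{2m+1}(\zeta^{j+1},z,q)$, both holomorphic near $\zeta=1$ under the standing assumptions on $z$ and $\zeta$, so the expansion is legitimate.

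On the left side of \eqn{ChanID}, the factor $(q)_\infty^2$ is independent of $\zeta$, so $D_{2m}\bigl(Y_m(\zeta,z,q)(q)_\infty^2\bigr)=(q)_\infty^2\,D_{2m}\bigl(Y_m(\zeta,z,q)\bigr)$, and \eqn{Ymterm} evaluates this to $(-1)^{m-1}(2m)!\,(m+1)!\,(m-1)!\,[C^*(z,q)]^{2m+1}(q)_\infty^{2m+1}$; I would then use $(-1)^{m-1}=(-1)^{m+1}$ to match the left-hand side of \eqn{mainresult}. On the right side, the term $S_{2m+1}(\zeta,z,q)$ contributes $D_{2m}S_{2m+1}(\zeta,z,q)=P_{2m+1,2m}(\mathcal{H}^*_{2m+1})\Sigma^{(2m+1)}(z,q)$ by Theorem \thm{DellSkIDTHM} with $k=2m+1$ and $\ell=2m$, which is the boundary case $\ell=k-1$. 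For each term $F_{j,m}(\zeta,q)\,S_{2m+1}(\zeta^{j+1},z,q)$ with $1\le j\le m-1$ I would apply the Leibniz rule above together with $D_{2m-a}S_{2m+1}(\zeta^{j+1},z,q)=(j+1)^{2m-a}P_{2m+1,2m-a}(\mathcal{H}^*_{2m+1})\Sigma^{(2m+1)}(z,q)$ from \eqn{DellS2m1zj}. Finally $D_{2m}\bigl(F_{0,m}(\zeta,q)\Sigma^{(2m+1)}(z,q)\bigr)=D_{2m}\bigl(F_{0,m}(\zeta,q)\bigr)\Sigma^{(2m+1)}(z,q)$, since $\Sigma^{(2m+1)}$ does not involve $\zeta$. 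Collecting the common factor $\Sigma^{(2m+1)}(z,q)$ then yields exactly \eqn{mainresult}, and the coefficients $D_a(F_{j,m}(\zeta,q))$ are precisely those produced recursively by \eqn{rec1} with initial value \eqn{D0F0m} when $j=0$, and by \eqn{rec2} with initial value \eqn{D0Fjm} when $1\le j\le m-1$.

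The one place where I would be careful is the boundary index $a=2m$ of each Leibniz sum, where one meets $D_0\bigl(S_{2m+1}(\zeta^{j+1},z,q)\bigr)=S_{2m+1}(1,z,q)$, a value that lies just outside the range $1\le\ell\le k-1$ covered by Theorem \thm{DellSkIDTHM} and hence by \eqn{DellS2m1zj}. Directly from \eqn{Skdef}, setting $\zeta=1$ makes the two summands coincide, so $S_{2m+1}(1,z,q)=2\,\Sigma^{(2m+1)}(z,q)$; this agrees with extending \eqn{DellS2m1zj} to $\ell=0$ via the closed form (\ref{Pellid}), which gives $P_{k,0}(x)=2$. Once this remark is in place the argument is pure bookkeeping, since all the substantive content is already done: $D_{2m}(Y_m)$ in \subsect{Ymterm}, $D_\ell S_k$ in Theorem \thm{DellSkIDTHM}, and the recursions for $D_a(F_{j,m})$ in \subsect{F0mterm} and \subsect{Fjmterms}. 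So I do not expect a real obstacle here — only the clerical task of keeping the indices $a$, the powers $(j+1)^{2m-a}$, and the operator polynomials $P_{2m+1,\cdot}(\mathcal{H}^*_{2m+1})$ straight through the substitution.
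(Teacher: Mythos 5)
Your proposal is correct and is essentially the paper's own proof: the paper derives Theorem \thm{maintheorem} in one step by applying $D_{2m}$ to both sides of \eqn{ChanID} and citing \eqn{Ymterm}, Theorem \thm{DellSkIDTHM}, \eqn{DellS2m1zj}, and the recurrences \eqn{rec1}--\eqn{D0Fjm}, exactly as you do via the Leibniz expansion. Your remark on the boundary case $a=2m$, where $D_0 S_{2m+1}(\zeta^{j+1},z,q)=S_{2m+1}(1,z,q)=2\,\Sigma^{(2m+1)}(z,q)$ must be reconciled with $P_{2m+1,0}=2$ from \eqn{Pellid} rather than from \eqn{Pkelldef}, is a detail the paper leaves implicit, and you handle it correctly.
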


For $n\ge0$ let $\mathcal{V}_n$ be the $\mathbb{Q}$-vector space spanned 
by the monomials
$\Phi_1^a\Phi_3^b\Phi_5^c$ with $a+2b+3c=n$. We Define
\begin{equation}
\mathcal{W}_n = \sum_{k=0}^n \mathcal{V}_n;
\end{equation}
i.e., $\mathcal{W}_n$ is the $\mathbb{Q}$-vector space spanned by the monomials
$\Phi_1^a\Phi_3^b\Phi_5^c$ with $0\le a+2b+3c\le n$. We call $\mathcal{W}_n$ the
space of \textit{quasi-modular forms} of weight less than or equal to $2n$.
This agrees with the definition in \cite[p.355]{ag} except this time we allow
monomials of weight $0$.

\begin{corollary}
\mylabel{cor:maincor}
Suppose $m\ge1$. Then there exist quasi-modular forms $f_j\in \mathcal{W}_j$ 
for 
$1 \le j \le m$ such that
\begin{equation}
 \Bigg( {\mathcal{H}^{*\,m}_{2m+1}} + \sum_{k=0}^{m-1} f_{m-k} 
\,{\mathcal{H}^{*\,k}_{2m+1}}\Bigg)
\Sigma^{(2m+1)}(z,q)
= (2m)!\,[C^*(z,q)]^{2m+1} (q)_\infty^{2m+1}.
\mylabel{eq:mainresultcor}
\end{equation}
\end{corollary}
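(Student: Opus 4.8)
The plan is to deduce Corollary~\ref{cor:maincor} from Theorem~\ref{thm:maintheorem} by a change of basis in the ring of differential operators and a tracking of ``weights.''  First I would start from \eqref{eq:mainresult} and divide both sides by the nonzero rational constant $(-1)^{m+1}(2m)!\,(m+1)!\,(m-1)!$, so that the right-hand side becomes a fixed scalar multiple of $[C^*(z,q)]^{2m+1}(q)_\infty^{2m+1}$; the goal is to show that, after this normalization, the operator acting on $\Sigma^{(2m+1)}(z,q)$ on the left can be rewritten in the form $\mathcal{H}^{*\,m}_{2m+1} + \sum_{k=0}^{m-1} f_{m-k}\,\mathcal{H}^{*\,k}_{2m+1}$ with $f_j\in\mathcal{W}_j$.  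Note the constant $(2m)!$ on the right of \eqref{eq:mainresultcor} will come out correctly precisely because the leading term $P_{2m+1,2m}(\mathcal{H}^*_{2m+1})$ has top-degree coefficient equal to (by \eqref{Pkelldef}) $k^{2m}$ with $k=2m+1$ --- wait, I need instead to observe that the full coefficient of $\mathcal{H}^{*\,m}_{2m+1}$ in the bracket, once everything is collected, is the relevant constant; this bookkeeping is the first thing to pin down.

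Next I would establish that the bracketed operator in \eqref{eq:mainresult} is a polynomial in the single operator $\mathcal{H}^{*}_{2m+1}$ of degree exactly $m$, with coefficients that are $\mathbb{Q}$-linear combinations of the Eisenstein series $G_2,G_4,\dots$ (equivalently the $\Phi_1,\Phi_3,\dots$) together with rational constants.  This is immediate from the structure of \eqref{eq:mainresult}: every summand is $D_a(F_{j,m})$ times $P_{2m+1,2m-a}(\mathcal{H}^*_{2m+1})$, and by \eqref{Pkelldef} each $P_{2m+1,\ell}(x)$ is a polynomial in $x$ of degree $\lfloor\ell/2\rfloor$, hence of degree at most $m$ in $\mathcal{H}^*_{2m+1}$; the top degree $m$ is achieved only by the $a=0$ term (and, for $j$ in the middle sum, also by $a=0$), so the coefficient of $\mathcal{H}^{*\,m}_{2m+1}$ is an explicit nonzero rational number.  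Meanwhile each $D_a(F_{j,m}(\zeta,q))$ is, by the recurrences \eqref{eq:rec1} and \eqref{eq:rec2} with rational initial data \eqref{eq:D0F0m}, \eqref{eq:D0Fjm}, a polynomial in $G_2,G_4,\dots,G_a$ with rational coefficients --- and since $G_{2k}=-B_{2k}/4k+\Phi_{2k-1}$, it lies in the rational span of products $\Phi_1^{a_1}\Phi_3^{a_2}\cdots$.  Dividing by the leading rational constant and expanding $\mathcal{H}^{*\,m}_{2m+1}$ off, we obtain $f_{m-k}$ as the coefficient of $\mathcal{H}^{*\,k}_{2m+1}$.

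The real content is the weight count: I must check $f_j\in\mathcal{W}_j$, i.e.\ that the coefficient of $\mathcal{H}^{*\,k}_{2m+1}$ is a combination of monomials $\Phi_1^a\Phi_3^b\Phi_5^c$ with $a+2b+3c\le m-k$.  Here I would assign $G_{2i}$ (equivalently $\Phi_{2i-1}$) the weight $i$ (matching $\Phi_1\leftrightarrow 1$, $\Phi_3\leftrightarrow 2$, $\Phi_5\leftrightarrow 3$, so that the ``weight $2n$'' modular-form convention of \cite{ag} corresponds to our integer weight $n$), and prove by induction on $a$ using \eqref{eq:rec1} and \eqref{eq:rec2} that $D_a(F_{j,m}(\zeta,q))$ has weight at most $\lfloor a/2\rfloor$: indeed in \eqref{eq:rec1} the term $G_{2i}D_{a-2i}(F_{0,m})$ has weight at most $i+\lfloor (a-2i)/2\rfloor=\lfloor a/2\rfloor$, and the term $(m+\tfrac12)D_{a-1}(F_{0,m})$ has weight at most $\lfloor (a-1)/2\rfloor\le\lfloor a/2\rfloor$.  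Then in the product $D_a(F_{j,m})\cdot P_{2m+1,2m-a}(\mathcal{H}^*_{2m+1})$, the term giving $\mathcal{H}^{*\,k}_{2m+1}$ requires $k\le\lfloor(2m-a)/2\rfloor=m-\lceil a/2\rceil$, so $a\le 2(m-k)$ and the accompanying Eisenstein weight is $\le\lfloor a/2\rfloor\le m-k$, as needed.  The main obstacle is precisely organizing this weight bookkeeping cleanly --- in particular verifying that the \emph{top} coefficient (the one that should be $1$ after normalization) does not pick up any Eisenstein contamination, which is where one uses that $P_{2m+1,\ell}$ contributes $\mathcal{H}^{*\,\lfloor\ell/2\rfloor}_{2m+1}$ only with a pure rational (constant) coefficient and that, among all $(j,a)$ pairs, the degree-$m$ part of $\mathcal{H}^*_{2m+1}$ arises only from $a=0$, where $D_0(F_{j,m})$ is rational by \eqref{eq:D0F0m} and \eqref{eq:D0Fjm}.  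Once the leading rational constant is identified and shown to equal $(-1)^{m+1}(2m)!\,(m+1)!\,(m-1)!/(2m)! = (-1)^{m+1}(m+1)!\,(m-1)!$, dividing through produces exactly \eqref{eq:mainresultcor} with the stated constant $(2m)!$ and with $f_j\in\mathcal{W}_j$.
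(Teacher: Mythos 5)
Your overall strategy is the same as the paper's: treat the bracketed operator in \eqref{eq:mainresult} as a polynomial of degree $m$ in $\mathcal{H}^{*}_{2m+1}$, prove by induction on the recurrences \eqref{eq:rec1}, \eqref{eq:rec2} that $D_a(F_{j,m}(\zeta,q))\in\mathcal{W}_{\lfloor a/2\rfloor}$, and combine this with the degree bound $\deg P_{2m+1,2m-a}=\lfloor (2m-a)/2\rfloor$ to conclude that the coefficient of $\mathcal{H}^{*\,k}_{2m+1}$ lies in $\mathcal{W}_{m-k}$; this part of your argument is correct and matches the paper's Section 4.6 essentially verbatim.

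The genuine gap is the evaluation of the leading coefficient. You correctly observe that the coefficient of $\mathcal{H}^{*\,m}_{2m+1}$ comes only from the $a=0$ terms and is a rational number, namely
$f_0 = 2+2\sum_{j=1}^{m-1}(-1)^j(j+1)^{2m}\prod_{k=1}^{j}\tfrac{m-k}{m+k+1}$
(the $2$'s coming from the top coefficient of $P_{2m+1,2m}$), but you then merely assert that this constant ``is identified and shown to equal'' $(-1)^{m+1}(m+1)!\,(m-1)!$. Nothing in your write-up proves this, nor even that $f_0\neq 0$, and both facts are needed: nonvanishing to divide through at all, and the exact value to land on the constant $(2m)!$ in \eqref{eq:mainresultcor}. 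This is the one nontrivial identity in the corollary's proof. The paper closes it by rewriting $f_0$ as $\tfrac{2(m-1)!(m+1)!}{(2m)!}\sum_{j=0}^{m-1}(-1)^j(j+1)^{2m}\binom{2m}{m-j-1}$, reindexing to reduce the claim to
$\sum_{j=0}^{2m}(-1)^{j}(m-j)^{2m}\binom{2m}{j}=(2m)!$,
and proving that by applying $D_{2m}$ to the binomial expansion of $\zeta^{-m}(\zeta-1)^{2m}$ and using the Stirling-number argument of Section 4.2 (the function has a zero of order $2m$ at $\zeta=1$, so $D_{2m}$ picks out the $2m$-th derivative there, which is $(2m)!$). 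You would need to supply this identity, or an equivalent finite-difference argument, to complete the proof.
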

\begin{proof}
Suppose $m\ge1$. It is well-known that
$$
\Phi_{2n-1} \in \mathcal{W}_n.
$$
See \cite[Eq.(3.25)]{ag}. Equation \eqn{GEis}, the recurrence \eqn{rec2} and
a simple induction argument imply that 
$$
D_a(F_{j,m}(\zeta,q)) \in \mathcal{W}_{\lfloor a/2 \rfloor},
$$
for $1 \le j \le {m-1}$. Similarly using \eqn{DaL0} and \eqn{rec1} we find that
\begin{equation}
D_{2m}(F_{0,m}(\zeta,q)) \in \mathcal{W}_{m}.
\mylabel{eq:D2mF0}
\end{equation}
Now we calculate the coefficient of ${\mathcal{H}^{*\,k}_{2m+1}}$ in the right side
of \eqn{mainresult}. The degree of the polynomial $P_{2m+1,2m-a}(x)$
is $\lfloor (2m-a)/2\rfloor$. Assuming $k \le \lfloor (2m-a)/2\rfloor$ we have 
$2k \le 2m - a$ and $\lfloor a/2 \rfloor \le m-k$, and in this case 
$D_a(F_{j,m}(\zeta,q))$ is in $\mathcal{W}_{m-k}$. This together     with
\eqn{D2mF0} implies that the coefficient of ${\mathcal{H}^{*\,k}_{2m+1}}$ is
in  $\mathcal{W}_{m-k}$  for $0 \le k \le m$. 
The coefficient
of ${\mathcal{H}^{*\,m}_{2m+1}}$ is
\begin{align}
f_0 &= 2 + 2 \sum_{j=1}^{m-1} (-1)^j (j+1)^{2m} \prod_{k=1}^j \frac{(m-k)}{m+k+1}
\nonumber
\\
&=
2\sum_{j=0}^{m-1}(-1)^j(j+1)^{2m} \frac{(m-1)!(m+1)!}{(m-j-1)!(m+j+1)!}
\mylabel{eq:f0}
\\
&=\frac{2(m-1)!(m+1)!}{(2m)!}\sum_{j=0}^{m-1}(-1)^j(j+1)^{2m} \binom{2m}{m-j-1}.
\nonumber
\end{align}
We show that
\begin{equation}
f_0 = (-1)^{m+1} \, (m+1)! \, (m-1)!. 
\mylabel{eq:f0id}
\end{equation}
In view of \eqn{f0} this is equivalent to showing that
\begin{equation}
\frac{2}{(2m)!}\sum_{j=0}^{m-1}(-1)^{m+j+1}(j+1)^{2m} \binom{2m}{m-j-1}
=1,
\mylabel{eq:f0id1}
\end{equation}
which we can rewrite as
\begin{equation}
2\sum_{j=0}^{m-1}(-1)^{j}(m-j)^{2m} \binom{2m}{j}
=(2m)!,
\mylabel{eq:f0id2}
\end{equation}
by replacing $j$ by $m-j-1$ in the sum.

Since 
$$
\sum_{j=0}^{m-1}(-1)^{j}(m-j)^{2m} \binom{2m}{j} =
\sum_{j=m+1}^{2m}(-1)^{j}(m-j)^{2m} \binom{2m}{j},
$$
it suffices to prove
\begin{equation}
\sum_{j=0}^{2m}(-1)^{j}(m-j)^{2m} \binom{2m}{j} =
(2m)!.
\mylabel{eq:f0id3}
\end{equation}

Beginning with the elementary identity
$$
\sum_{j=0}^{2m}\binom{2m}{j}x^j y^{2m-j} = 
(x+y)^{2m},
$$
setting $x = -\frac{1}{\sqrt{\zeta}}$ and $y = \sqrt{\zeta}$, we obtain
\begin{equation} 
\sum_{j=0}^{2m}\binom{2m}{j}(-1)^j \zeta^{m-j} =
\zeta^{-m}(\zeta-1)^{2m}.
\mylabel{eq:elID}
\end{equation}
We apply $D_{2m}$ to both sides of \eqn{elID} and argue as in Section
\subsect{Ymterm} to obtain \eqn{f0id3} which completes the proof
of \eqn{f0id}.
The final result \eqn{mainresultcor} follows 
from \eqn{mainresult} by dividing both sides by $f_0$ and using \eqn{f0id}.
\end{proof}

\subsection{Some examples}
\mylabel{subsec:examples} 
We illustrate Theorem \thm{maintheorem} and Corollary \corol{maincor} 
with some examples.
We show details of the calculations for the cases $m=1$, $2$. 
In cases $m=3$, $4$
we give the quasi-modular forms 
$f_j$ ($1 \le j \le m$) in Corollary \corol{maincor}, in terms of
the functions $\Phi_{2k-1}(q)$ rather than the $G_{2k}(q)$.

\begin{example}
$m=1$.
\begin{align*}
4\, [C^*(z,q)]^{3} (q)_\infty^{3} 
&= \Bigg(9 + 2\,\mathcal{H}_{3} - D_{2}(F_{0,1}(\zeta,q))\Bigg) \Sigma^{(3)}(z,q)\\
&= \Bigg(9 + 2\,\mathcal{H}_{3} -(5 - 12\,\Phi_1)\Bigg) \Sigma^{(3)}(z,q)\\
&=  \Bigg(2\,\mathcal{H}_{3} + 4 +  12\,\Phi_1\Bigg) \Sigma^{(3)}(z,q),
\end{align*}
and
$$
\Bigg(\mathcal{H}_{3} + 2 +  6\,\Phi_1\Bigg) \Sigma^{(3)}(z,q)=
2\, [C^*(z,q)]^{3} (q)_\infty^{3}.
$$
This identity implies the Rank-Crank PDE \eqn{rcpde} as in 
\cite[Section 2]{ag}.
\end{example}

\begin{example}
$m=2$.
\begin{align*}
&-144\, [C^*(z,q)]^{5} (q)_\infty^{5}\\
&=
\Bigg(625+100\,{\mathcal{H}^{*}}_5+2\,{\mathcal{H}^{*}}_5^2+
16\,D_0(F_{1,2}(\zeta,q))\,(625+100\,{\mathcal{H}^{*}}_5+2\,{\mathcal{H}^{*}}_5^2)\\
&+32\,D_1(F_{1,2}(\zeta,q))\,(125+15\,{\mathcal{H}^{*}}_5)
+24\,D_2(F_{1,2}(\zeta,q))\,(25+2\,{\mathcal{H}^{*}}_5)
+40\,D_3(F_{1,2}(\zeta,q)) \\
&+2\,D_4(F_{1,2}(\zeta,q))
- D_4(F_{0,2}(\zeta,q)) \Bigg) \Sigma^{(5)}(z,q)\\
&=
\Bigg(625+100\,{\mathcal{H}^{*}}_5+2\,{\mathcal{H}^{*}}_5^2
-4\,(625+100\,{\mathcal{H}^{*}}_5+2\,{\mathcal{H}^{*}}_5^2)\\
&+20\,(125+15\,{\mathcal{H}^{*}}_5)
-(30+180\,\Phi_1)(25+2\,{\mathcal{H}^{*}}_5)
+ (\frac{125}{2} +2250\,\Phi_1)\\
&+(\frac{1}{2} - 450 \,\Phi_1 - 1350\,\Phi_1^2 - 255\,\Phi_3)
+(-82 + 600\,\Phi_1 - 450\Phi_1^2 + 195\,\Phi_3)
\Bigg) \Sigma^{(5)}(z,q),
\end{align*}
and
\begin{equation}
\Bigg({\mathcal{H}^{*}}_5^2 + (60\Phi_1+ 10)\,{\mathcal{H}^{*}}_5
  +300\,\Phi_1^2 + 10\,\Phi_3 + 350\,\Phi_1+24
\Bigg) \Sigma^{(5)}(z,q)
=24\, [C^*(z,q)]^{5} (q)_\infty^{5}.
\mylabel{eq:PDEC5}
\end{equation}
In this case of Corollary \corol{maincor} we see that
\begin{align*}
f_1 &= 60\Phi_1 +10,\\
f_2 &= 300\,\Phi_1^2 + 10\,\Phi_3 + 350\,\Phi_1+24.
\end{align*}
We show how this identity implies \eqn{grcpdev2}.
We need the results
$$
\delta_q((q)_\infty) = -\Phi_1\,(q)_\infty,\qquad
\delta_q(\Phi_1) =  \tfrac{1}{6}\Phi_1 - 2\Phi_1^2 + \tfrac{5}{6}\Phi_3.
$$
This implies that
\begin{equation}
\mathcal{H}_5^{*}(\Sigma^{*}(z,q)) 
=\mathcal{H}_5^{*}((q)_\infty^3 G^{(5)}(z,q)) 
=(q)_\infty^3 (\mathcal{H}_5^{*} - 30\Phi_1) G^{(5)}(z,q), 
\mylabel{eq:H5S}
\end{equation}
and
\begin{equation}
\mathcal{H}_5^{*\,2}(\Sigma^{*}(z,q)) 
=\mathcal{H}_5^{*\,2}((q)_\infty^3 G^{(5)}(z,q)) 
=(q)_\infty^3 (\mathcal{H}_5^{*\,2} - 60\Phi_1\mathcal{H}_5^{*}
-50\Phi_1 + 1500\Phi_1^2 - 250\Phi_3) G^{(5)}(z,q). 
\mylabel{eq:H52S}
\end{equation}
Substituting \eqn{H5S}, \eqn{H52S} into \eqn{PDEC5} we find
$$
(\mathcal{H}_5^{*\,2} + 10 \mathcal{H}_5^{*} + 24 - 240\Phi_3) G^{(5)}(z,q)
=24\, [C^*(z,q)]^{5} (q)_\infty^{5},
$$
which simplifies to \eqn{grcpdev2} since
$$
\mathbf{H}_* = \mathcal{H}_5^{*} + 5.
$$
\end{example}


\begin{example}
$m=3$.
We find that
\begin{align*}
f_1 &= 210\,\Phi_1+28 \\
f_2 &= 210\,\Phi_3+8820\,\Phi_1^2+252+4410\,\Phi_1\\
f_3 &= 41160\,\Phi_1^3+2450\,\Phi_3+14\,\Phi_5+720+22736\,\Phi_1+2940\,\Phi_3\,\Phi_1+102900\,\Phi_1^2
\end{align*}
\end{example}

\begin{example}
$m=4$.
We find that
\begin{align*}
f_1 &= 504\,\Phi_1+60\\
f_2 &= 1260\,\Phi_3+24948\,\Phi_1+1308+68040\,\Phi_1^2\\
f_3 &=  
136080\,\Phi_3\,\Phi_1+504\,\Phi_5+45360\,\Phi_3+2449440\,\Phi_1^2+403704\,\Phi_1+2449440\,\Phi_1^3+12176\\
f_4 &=  
40320+2126232\,\Phi_1+404082\,\Phi_3+9828\,\Phi_5+2653560\,\Phi_3\,\Phi_1+21820428\,\Phi_1^2\\
&\quad +9072\,\Phi_1\,\Phi_5+47764080\,\Phi_1^3+18\,\Phi_7+1224720\,\Phi_3\,\Phi_1^2+11340\,\Phi_3^2+11022480\,\Phi_1^4.
\end{align*}
\end{example}

\section{Concluding remarks}                  
\mylabel{sec:conclusion} 

The main goal of this paper was to show how the generalized Lambert series
identity \eqn{shc} leads to the higher level Rank-Crank-type PDEs of Zwegers.
The first author's proof \cite{chan} of \eqn{shc} only involves a partial fraction
argument and this together with the proof in Section \sect{higher} 
gives an elementary $q$-series proof of these
higher level Rank-Crank-type PDEs. The elliptic function proof of \eqn{shc}
in Section \sect{chansidelliptic} is independent of the other sections.   
Our form of Zwegers's result \eqn{ZwegersPDE} was given above in \eqn{mainresultcor}.
In our form the
coefficients are quasimodular forms rather than holomorphic modular
forms. The quasimodular function $E_2$ occurs in Zwegers's result as part of the 
definition of his operator $\mathcal{H}_k$. Our coefficient functions 
are given recursively. It would be interesting to find explicit expressions
for the coefficients and to derive the form of Zwegers's result by our method.
The coefficients in the \thpow{4} order PDE \eqn{grcpdev2} only involve the
holomorphic modular form $E_4$, and the differential operator
$\mathbf{H}_{*}$ does not involve the quasimodular $E_2$. It would be interesting
to determine whether there is a renormalization of higher order Rank-Crank-type 
PDEs
which only involve holomorphic modular forms, either as coefficients
or in the definition of the differential operator.
Bringmann, Lovejoy and Osburn \cite{Br-Lo-Os09}, \cite{Br-Lo-Os10}
found Rank-Crank-type PDEs
for overpartitions. Bringmann and Zwegers \cite{Br-Zw} showed how
these results fit into the framework of non-holomorphic Jacobi forms
and found an infinite family of these PDEs. However these PDEs only involve
Appell functions of level $1$ or $3$. It would be interesting to determine
whether the methods of this paper could be extended to find PDEs for
higher level analogues.

\bigskip

\noindent
\textbf{Acknowledgements}

\noindent
We would like to thank Bruce Berndt and Ken Ono for their comments
and suggestions.



\begin{thebibliography}{00}


\bibitem{ang}
G.E.~Andrews and F.G.~Garvan, \emph{Dyson's crank of a partition}, Bull. Amer. Math. Soc. (N.S.) \textbf{18} (1988), 167--171.


\bibitem{ag}
A.O.L.~Atkin and F.G.~Garvan, \emph{Relations between the ranks and cranks of partitions}, Ramanujan J. \textbf{7} (2003), 343--366.

\bibitem{as}
A.O.L.~Atkin and H.P.F.~Swinnerton-Dyer, \emph{Some properties of partitions}, Proc. London Math. Soc. (3) \textbf{4} (1954), 84--106.

\bibitem{ntsr}
B.C.~Berndt, \emph{Number Theory in the Spirit of Ramanujan}, 
American Mathematical Society, Providence, RI, 2006.

\bibitem{Br-Lo-Os09}
K.~Bringmann, J.~Lovejoy and R.~Osburn,
\emph{Rank and crank moments for overpartitions},
J. Number Theory \textbf{129} (2009), 1758-–1772.

\bibitem{Br-Lo-Os10}
K.~Bringmann, J.~Lovejoy and R.~Osburn,
\emph{Automorphic properties of generating functions for generalized rank 
moments and Durfee symbols}, 
Int. Math. Res. Not. (2010), no. 2, 238--260.

\bibitem{Br-Zw}
K.~Bringmann and S.~Zwegers,
\emph{Rank-crank type PDE's and non-holomorphic Jacobi forms},
Math. Res. Lett.  \textbf{17} (2010),  589--600. 


\bibitem{chan}
S.H.~Chan, \emph{Generalized Lambert series identities}, Proc. London Math. Soc. (3) \textbf{91} (2005), 598--622.


\bibitem{dys}
F.J.~Dyson, \emph{Some guesses in the theory of partitions}, Eureka (Cambridge) \textbf{8} (1944), 10--15.

\bibitem{dys1}
F.J.~Dyson, \emph{Selected papers of Freeman Dyson with commentary}, Amer. Math. Soc., Providence, RI, 1996.

\bibitem{htf}
A.~Erd\'{e}lyi, W.~Magnus, F.~Oberhettinger and F.G.~Tricomi, 
\emph{Higher Transcendental Functions. Vol. II}, McGraw-Hill Book Company, New York, 1953.

\bibitem{gar}
F.G.~Garvan, \emph{New Combinatorial Interpretations of Ramanujan's Partition 
Congruences Mod 5,7 and 11}, Trans. Amer. Math. Soc. \textbf{305} (1988), 47--77.
\bibitem{gmm}
F.G.~Garvan, \emph{Generalizations of Dyson's ranks and non-Rogers-Ramanujan partitions}, Manuscr. Math. \textbf{84} (1994), 343--359.

\bibitem{jack}
M.~Jackson, \emph{On some formulae in partition theory and bilateral basic hypergeometric series}, J. London Math. Soc. \textbf{24} (1949), 233--237.

\bibitem{lewis}
R.P.~Lewis, \emph{Relations between ranks and cranks modulo $9$}, J. London Math. Soc. (2) \textbf{45} (1992), 222--231.

\bibitem{lnb}
S.~Ramanujan, \emph{The Lost Notebook and Other Unpublished
Papers}, Narosa, New Delhi, 1988.

\bibitem{tm}
J.~Tannery and J.~Molk, \emph{\'El\'ements de la Th\'eorie des Fonctions 
Elliptiques}, Vol. III, Gauthier-Villars, Paris, 1896 
[reprinted: Chelsea, New York, 1972]

\bibitem{Wa}
G.N.~Watson, \emph{The final problem: An account of the mock theta functions},
J. London Math. Soc. \textbf{11} (1936), 55-80.

\bibitem{ww}
E.T.~Whittaker and G.N.~Watson, \emph{A Course of Modern
Analysis}, 4th ed., Cambridge University Press, Cambridge, 1966.


\bibitem{zwegers}
S.P.~Zwegers, \emph{Mock Theta Functions}, Ph.D. Thesis, Universiteit Utrecht, 2002.

\bibitem{zwe}
S.P.~Zwegers, \emph{Rank-Crank type PDE's for higher level Appell functions}, Acta Arith.~\textbf{144} (2010), 263--273.

\end{thebibliography}
\end{document}